\documentclass[pdflatex,sn-mathphys-ay]{sn-jnl}

\usepackage{graphicx}%
\usepackage{multirow}%
\usepackage{amsmath,amssymb,amsfonts}%
\usepackage{amsthm}%
\usepackage{mathrsfs}%
\usepackage[title]{appendix}%
\usepackage{xcolor}%
\usepackage{textcomp}%
\usepackage{manyfoot}%
\usepackage{booktabs}%
\usepackage{algorithm}%
\usepackage{algorithmicx}%
\usepackage{algpseudocode}%
\usepackage{listings}%
\usepackage{placeins}%
\usepackage{subcaption}%

\theoremstyle{thmstyleone}%
\newtheorem{theorem}{Theorem}%

\theoremstyle{thmstyletwo}%

\theoremstyle{thmstylethree}%
\DeclareMathOperator*{\argmin}{arg\,min}

\begin{document}

\title[CV-based feature selection for linear SVM]{Cross-validation-based optimal feature selection for linear SVM classification}

\author*[1]{\fnm{Masaharu} \sur{Mori}}\email{s2520490@u.tsukuba.ac.jp}
\author[2]{\fnm{Shunnosuke} \sur{Ikeda}}\email{ikeda@cs.tsukuba.ac.jp}
\author[3]{\fnm{Ryuta} \sur{Tamura}}\email{r.tamura.cbc@gmail.com}
\author[2]{\fnm{Yuichi} \sur{Takano}}\email{ytakano@sk.tsukuba.ac.jp}
\author[4]{\fnm{Ryuhei} \sur{Miyashiro}}\email{r-miya@cc.tuat.ac.jp}

\affil*[1]{\orgdiv{Graduate School of Science and Technology}, \orgname{University of Tsukuba}, \orgaddress{\street{1--1--1 Tennodai}, \city{Tsukuba-shi}, \postcode{305--8573}, \state{Ibaraki}, \country{Japan}}}

\affil[2]{\orgdiv{Institute of Systems and Information Engineering}, \orgname{University of Tsukuba}, \orgaddress{\street{1--1--1 Tennodai}, \city{Tsukuba-shi}, \postcode{305--8573}, \state{Ibaraki}, \country{Japan}}}

\affil[3]{\orgname{Gurobi Japan Co., Ltd.}, \orgaddress{\street{Zelkova Building 3F, 1--25--12 Fuchu-cho}, \city{Fuchu-shi}, \postcode{183--0055}, \state{Tokyo}, \country{Japan}}}

\affil[4]{\orgdiv{Institute of Engineering}, \orgname{Tokyo University of Agriculture and Technology}, \orgaddress{\street{2--24--16 Naka-cho}, \city{Koganei-shi}, \postcode{184--8588}, \state{Tokyo}, \country{Japan}}}

\abstract{
This paper addresses feature subset selection for Support Vector Machines (SVMs) based on the cross-validation criterion.
Unlike statistical criteria such as the Akaike information criterion (AIC) and the Bayesian information criterion (BIC), cross-validation requires only the mild assumption that samples are independently and identically distributed (i.i.d.).
For this reason, the cross-validation criterion is expected to work well across a wide range of prediction problems, and it has already demonstrated its usefulness as a feature subset selection method for regression.
The objective of this paper is to extend the framework of best feature subset selection via the cross-validation criterion to SVM classification problems.
This subset-selection problem can be formulated as a bilevel mixed-integer optimization problem.
Because bilevel optimization problems are generally hard to solve, we introduce the Least Squares Support Vector Machine (LS-SVM), whose optimality conditions admit a closed-form expression, and reduce the problem to a single-level mixed-integer optimization problem.
This reformulation allows us to solve the problem using standard optimization software.
We evaluate the proposed framework through simulation experiments that compare it with a regularization-based method ($L_1$-regularization), a sequential search method (recursive feature elimination), and mixed-integer optimization (MIO) based on statistical criteria.
The results show that the proposed framework achieves favorable performance both in classification accuracy and feature selection accuracy.
}

\keywords{feature subset selection, support vector machine, least squares support vector machine, cross-validation criterion, mixed-integer optimization}

\maketitle

\section{Introduction}\label{sec:introduction}
\subsection{Background}\label{subsec:intro-background}

Support Vector Machines (SVMs) are a representative binary classification method based on the optimal separating hyperplane, and they are known to achieve strong generalization performance even on high-dimensional data \citep{cortes1995svm,vapnik1998statistical}.
For this reason, SVMs have demonstrated effectiveness in a wide range of fields such as pattern recognition and computer vision, and they are widely used in real-world applications \citep{cervantes2020comprehensive}.
However, real-world data often contain irrelevant or redundant features, which can degrade classification performance and reduce interpretability.
If appropriate feature selection removes such unnecessary features, it can improve predictive performance, suppress overfitting, reduce computational cost, and enhance interpretability.
For this reason, feature selection for SVMs is an important research topic \citep{guyon2003intro,maldonado2014svm}.

Feature selection requires a criterion to evaluate the quality of each candidate subset of features.
Commonly used criteria include the Akaike information criterion (AIC; \citealt{akaike1974aic}), the Bayesian information criterion (BIC; \citealt{schwarz1978bic}), and the cross-validation criterion \citep{allen1974cv,geisser1975cv,stone1974cv}.
Whereas AIC and BIC are likelihood-based model selection criteria, the cross-validation criterion directly evaluates the prediction error on a held-out validation set after splitting the samples into training and validation subsets.
Thus, the cross-validation criterion can be applied under relatively mild assumptions and is widely used across a broad range of prediction problems \citep{arlot2010cvsurvey}.
In this paper, we focus on the cross-validation criterion and investigate its application to feature selection for SVMs.

\subsection{Related work}\label{subsec:intro-related}

Many feature subset selection methods have been proposed for SVMs; however, most of them use sequential search methods \citep{guyon2002gene} or regularization-based methods \citep{tibshirani1996lasso,weston2003zeronorm}, and do not provide a framework for directly identifying the best feature subset \citep{miller2002subset,maldonado2014svm}.

Recently, methods based on mixed-integer optimization (MIO) have attracted attention as a way to directly identify the best feature subset \citep{bertsimas2016bestsubset,hastie2017extended,maldonado2014svm,takano2020bestsubsetcv}.
MIO-based feature subset selection methods have also been proposed for SVMs; \citet{maldonado2014svm} presented a model with a cardinality constraint, but this approach requires the number of selected features (cardinality) to be specified in advance.
For logistic regression, MIO models using information criteria as the objective function have been proposed, which enable joint optimization of the feature subset and its cardinality \citep{sato2016logisticmio}.
However, AIC and BIC are likelihood-based criteria, and they are difficult to apply to SVMs, which do not directly use a likelihood function.
For linear regression, an MIO model that selects the best feature subset via the cross-validation criterion has been proposed \citep{takano2020bestsubsetcv}.
This framework enables joint determination of the feature subset and its cardinality on the basis of prediction error.
However, the cross-validation-based feature subset selection framework has not been extended to classification problems.
In a related line of work, hyperparameter selection for SVMs has been formulated as a bilevel optimization problem: cross-validation-based model selection has been proposed for Support Vector Regression \citep{bennett2006bilevel} and SVM classification \citep{kunapuli2008bilevel}.
However, these studies focus on hyperparameter selection and do not extend to directly selecting the best feature subset.
In this paper, we extend the best feature subset selection framework based on the cross-validation criterion to SVM classification problems.

\subsection{Contribution}\label{subsec:intro-contribution}
The objective of this paper is to extend the framework of cross-validation-based feature subset selection to SVM classification problems.
The resulting problem is formulated as a bilevel optimization problem, consisting of training on the training set in the lower-level problem and evaluation on the validation set in the upper-level problem.
Specifically, the lower-level problem trains the Least Squares Support Vector Machine (LS-SVM) \citep{suykens1999lssvm} on the training set, and the upper-level problem selects features that minimize the classification loss on the validation set.
We further exploit the optimality conditions of LS-SVM to embed the lower-level problem into the constraints of the upper-level problem, thereby reducing the bilevel problem to a single-level mixed-integer optimization problem.
This reformulation allows us to solve the problem with general-purpose MIO solvers.

The main contributions of this paper are threefold:
\begin{itemize}
\item First, we extend the framework of best feature subset selection based on the cross-validation criterion from linear regression to SVM classification problems.
\item Second, we formulate the feature subset selection problem as a single-level mixed-integer optimization problem by exploiting the optimality conditions of LS-SVM.
\item Third, through numerical experiments, we compare the proposed framework with existing methods and demonstrate its effectiveness in terms of classification accuracy and feature selection accuracy.
\end{itemize}

\section{Support Vector Machines}\label{sec:svm}

\subsection{Support Vector Machines for Binary Classification}\label{subsec:binary-svm}

We consider a classification problem that assigns a binary class label \(\hat{y} \in \{-1,+1\}\) to each input.
Let \(p\) denote the number of features, and define \([p] := \{1,\dots,p\}\).
An input vector is written as \(\boldsymbol{x} := (x_j)_{j \in [p]} \in \mathbb{R}^p\).
For the linear SVM, the decision function is defined as follows:
\begin{equation}
f(\boldsymbol{x}) = \boldsymbol{w}^\top \boldsymbol{x} + b
= \sum_{j=1}^{p} w_j x_j + b,
\label{eq:svm-decision-function}
\end{equation}
where \(\boldsymbol{w} \in \mathbb{R}^p\) is a weight vector and \(b \in \mathbb{R}\) is a bias term.
The predicted label is determined by the sign of the linear decision function:
\begin{equation}
\left\{
\begin{aligned}
f(\boldsymbol{x}) < 0 &\Longrightarrow \hat{y} = -1, \\
f(\boldsymbol{x}) > 0 &\Longrightarrow \hat{y} = +1.
\end{aligned}
\right.
\label{eq:svm-prediction}
\end{equation}
Now suppose that a set of samples \(\{(\boldsymbol{x}_i,y_i)\mid i\in[n]\}\) is given, where \([n] := \{1,\dots,n\}\) is the index set of samples, each \(\boldsymbol{x}_i := (x_{ij})_{j\in[p]} \in \mathbb{R}^p\) is a feature vector, and \(y_i \in \{-1,+1\}\) is the corresponding class label.
The condition that all samples be correctly separated by \eqref{eq:svm-prediction} is given by
\begin{equation}
y_i f(\boldsymbol{x}_i) > 0
\quad (i \in [n]).
\label{eq:linear-separable-condition}
\end{equation}
When all training samples are linearly separable, the hard-margin SVM is formulated as the following optimization problem \citep{cortes1995svm,vapnik1998statistical}:
\begin{align}
\min_{\boldsymbol{w}, b} \quad
& \frac{1}{2}\|\boldsymbol{w}\|_2^2
\label{eq:hard-margin-svm-objective}
\\
\textrm{s.t.} \quad
& y_i(\boldsymbol{w}^\top \boldsymbol{x}_i + b) \ge 1
\quad (i = 1, \ldots, n),
\label{eq:hard-margin-svm-constraint}
\\
& b \in \mathbb{R}, \ \boldsymbol{w} \in \mathbb{R}^p.
\label{eq:hard-margin-svm-domain}
\end{align}
However, real-world data are not always linearly separable.
To relax the linear separability condition, we introduce decision variables \(\boldsymbol{\xi} := (\xi_i)_{i\in[n]} \in \mathbb{R}_{+}^n\) that represent the classification error, and consider the soft-margin SVM.
The soft-margin SVM minimizes the sum of the $L_2$-regularization term on \(\boldsymbol{w}\) and the classification error \citep{cortes1995svm}:
\begin{align}
\min_{\boldsymbol{w}, b, \boldsymbol{\xi}} \quad
& \frac{1}{2}\|\boldsymbol{w}\|_2^2 + C \sum_{i=1}^n \xi_i
\label{eq:soft-margin-svm-objective}
\\
\textrm{s.t.} \quad
& y_i(\boldsymbol{w}^\top \boldsymbol{x}_i + b) \ge 1 - \xi_i
\quad (i = 1, \ldots, n),
\label{eq:soft-margin-svm-constraint}
\\
& b \in \mathbb{R}, \ \boldsymbol{w} \in \mathbb{R}^p, \ \boldsymbol{\xi} \in \mathbb{R}_{+}^n.
\label{eq:soft-margin-svm-domain}
\end{align}
Here, \(C \in \mathbb{R}_{+}\) is a regularization parameter.

\subsection{Least Squares Support Vector Machines}\label{subsec:ls-svm}
The Least Squares SVM (LS-SVM) introduces decision variables \(\boldsymbol{e} := (e_i)_{i\in[n]} \in \mathbb{R}^n\) that represent the residuals, replaces the inequality constraints \eqref{eq:soft-margin-svm-constraint} with equality constraints, and minimizes the residual sum of squares \citep{suykens1999lssvm}.
It is formulated as the following optimization problem:
\begin{align}
\min_{\boldsymbol{w}, b, \boldsymbol{e}} \quad
& \frac{1}{2}\|\boldsymbol{w}\|_2^2 + \frac{\gamma}{2}\sum_{i=1}^n e_i^2
\label{eq:lssvm-primal-objective}
\\
\textrm{s.t.} \quad
& y_i(\boldsymbol{w}^\top \boldsymbol{x}_i + b) = 1 - e_i
\quad (i = 1, \ldots, n),
\label{eq:lssvm-primal-constraint}
\\
& b \in \mathbb{R}, \ \boldsymbol{e} \in \mathbb{R}^n, \ \boldsymbol{w} \in \mathbb{R}^p.
\label{eq:lssvm-primal-domain}
\end{align}
Here, \(\gamma \in \mathbb{R}_{+}\) is a parameter that weights the loss term.
From the constraint \eqref{eq:lssvm-primal-constraint}, we have
\(
e_i = 1 - y_i(\boldsymbol{w}^\top \boldsymbol{x}_i + b).
\)
Therefore, LS-SVM can be rewritten as the following unconstrained optimization problem:
\begin{equation}
\min_{\boldsymbol{w}, b} \quad
\frac{1}{2}\|\boldsymbol{w}\|_2^2
+ \frac{\gamma}{2}\sum_{i=1}^n
\left(
1 - y_i(\boldsymbol{w}^\top \boldsymbol{x}_i + b)
\right)^2.
\label{eq:lssvm-unconstrained}
\end{equation}
Taking partial derivatives of the objective function in \eqref{eq:lssvm-unconstrained} with respect to \(\boldsymbol{w}\) and \(b\) and setting them to zero yields
\begin{align}
\frac{\partial}{\partial \boldsymbol{w}}
\left[
\frac{1}{2}\|\boldsymbol{w}\|_2^2
+
\frac{\gamma}{2}\sum_{i=1}^n
\left(
1 - y_i(\boldsymbol{w}^\top \boldsymbol{x}_i + b)
\right)^2
\right]
&=
\boldsymbol{0},
\label{eq:lssvm-derivative-w}
\\
\frac{\partial}{\partial b}
\left[
\frac{1}{2}\|\boldsymbol{w}\|_2^2
+
\frac{\gamma}{2}\sum_{i=1}^n
\left(
1 - y_i(\boldsymbol{w}^\top \boldsymbol{x}_i + b)
\right)^2
\right]
&=
0.
\label{eq:lssvm-derivative-b}
\end{align}
Using \(y_i^2 = 1\), which follows from \(y_i \in \{-1,+1\}\), we rearrange these expressions to obtain the first-order optimality conditions:
\begin{align}
&\boldsymbol{w} - \gamma \sum_{i \in [n]} \left(y_i - \boldsymbol{w}^\top \boldsymbol{x}_i - b\right)\boldsymbol{x}_i = \boldsymbol{0},
\label{eq:lssvm-stationary-w}
\\
&\gamma \sum_{i \in [n]}\left(y_i - \boldsymbol{w}^\top \boldsymbol{x}_i - b\right) = 0.
\label{eq:lssvm-stationary-b}
\end{align}

\section{Cross-Validation Criterion for Feature Selection}\label{sec:cvc}

Let \([K] := \{1,\dots,K\}\) denote the index set of folds.
We partition the index set of samples \([n]\) into \(K\) subsets \(\mathcal{N}_k\) of approximately equal size, satisfying the following conditions:
\begin{equation}
[n]
=
\bigcup_{k \in [K]} \mathcal{N}_k,
\quad
\mathcal{N}_k \cap \mathcal{N}_{k'} = \emptyset \ (k \ne k'),
\quad
|\mathcal{N}_k| \approx \frac{n}{K}
\ (k \in [K]).
\label{eq:cv-partition}
\end{equation}
For each \(k \in [K]\), we define the training set \(\mathcal{T}_k\) and the validation set \(\mathcal{V}_k\) as follows:
\begin{equation}
\mathcal{T}_k := [n] \setminus \mathcal{N}_k,
\qquad
\mathcal{V}_k := \mathcal{N}_k.
\label{eq:training-validation-set}
\end{equation}
We introduce binary decision variables \(\boldsymbol{z} := (z_j)_{j \in [p]} \in \{0,1\}^p\) that control the feature selection.
Let \(w_j^{(k)}\) denote the coefficient corresponding to the \(j\)-th feature of the linear decision function in the \(k\)-th fold.
To represent unused features, we impose the constraint that the coefficient \(w_j^{(k)}\) is fixed to zero whenever \(z_j = 0\).
For each \(k \in [K]\) and the feature combination determined by \(\boldsymbol{z}\), the training phase is formulated as follows:

\begin{align}
\left(
\hat{\boldsymbol{w}}^{(k)},
\hat{b}^{(k)}
\right)
\in
\argmin \quad
&
\frac{1}{2}\|\boldsymbol{w}^{(k)}\|_2^2
+
\frac{\gamma}{2}
\sum_{i \in \mathcal{T}_k}
\left(
1 - y_i
\left(
(\boldsymbol{w}^{(k)})^\top \boldsymbol{x}_i + b^{(k)}
\right)
\right)^2
\label{eq:cv-training-objective}
\\
\textrm{s.t.} \quad
&
z_j = 0
\Rightarrow
w_j^{(k)} = 0
\quad
(j \in [p]).
\label{eq:cv-training-constraint}
\end{align}
Using the solutions \(\left(\hat{\boldsymbol{w}}^{(k)}, \hat{b}^{(k)}\right)\) obtained in the training phase, the validation phase, which evaluates the classification loss on the validation set \(\mathcal{V}_k\), is formulated as follows:
\begin{align}
\min \quad
&
\sum_{k \in [K]}
\sum_{i \in \mathcal{V}_k}
\xi_i^{(k)}
\label{eq:cv-validation-objective}
\\
\textrm{s.t.} \quad
&
y_i
\left(
(\hat{\boldsymbol{w}}^{(k)})^\top \boldsymbol{x}_i + \hat{b}^{(k)}
\right)
\ge
1 - \xi_i^{(k)}
\quad
(i \in \mathcal{V}_k,\ k \in [K]).
\label{eq:cv-validation-constraint}
\\
&
\boldsymbol{\xi}^{(k)} \in \mathbb{R}_{+}^{|\mathcal{V}_k|}
\quad
(k \in [K]).
\label{eq:cv-validation-domain}
\end{align}
In summary, cross-validation-based feature subset selection is a framework that approximately evaluates the generalization performance of a selected feature subset: for each fold, it trains LS-SVM on the corresponding training set in the training phase \eqref{eq:cv-training-objective}--\eqref{eq:cv-training-constraint}, and assesses the classification performance on the validation set based on the results of the validation phase \eqref{eq:cv-validation-objective}--\eqref{eq:cv-validation-domain}.

\section{Mixed-Integer Optimization Formulations}\label{sec:mio}

\subsection{Bilevel MIO Formulation}\label{subsec:bilevel}

In this section, we formulate the cross-validation-based feature subset selection problem introduced in Section \ref{sec:cvc} as a bilevel MIO problem.
First, for each \(k \in [K]\) and the feature selection variable \(\boldsymbol{z} := (z_j)_{j \in [p]} \in \{0,1\}^p\), we define the set of optimal solutions of the lower-level problem \(\mathcal{A}^{(k)}(\boldsymbol{z})\) as follows:
\begin{align}
\mathcal{A}^{(k)}(\boldsymbol{z})
:=
\argmin \quad
&
\frac{1}{2}\|\boldsymbol{w}^{(k)}\|_2^2
+
\frac{\gamma}{2}
\sum_{i \in \mathcal{T}_k}
\left(
1 - y_i
\left(
(\boldsymbol{w}^{(k)})^\top \boldsymbol{x}_i + b^{(k)}
\right)
\right)^2
\label{eq:bilevel-lower-objective}
\\
\textrm{s.t.} \quad
&
z_j = 0
\Rightarrow
w_j^{(k)} = 0
\quad
(j \in [p]).
\label{eq:bilevel-lower-constraint}
\end{align}
Given the set of optimal solutions \(\mathcal{A}^{(k)}(\boldsymbol{z})\) of the lower-level problem, we formulate the upper-level problem to minimize the classification loss on the validation set as follows:

\begin{align}
\min \quad
& \sum_{k \in [K]}\sum_{i \in \mathcal{V}_k}\xi_i^{(k)}\label{eq:bilevel-upper-objective}
\\
\textrm{s.t.} \quad
& y_i\left((\hat{\boldsymbol{w}}^{(k)})^\top \boldsymbol{x}_i + \hat{b}^{(k)}\right)\ge1 - \xi_i^{(k)}\quad(i \in \mathcal{V}_k,\ k \in [K]),\label{eq:bilevel-upper-validation}
\\
&\left(\hat{\boldsymbol{w}}^{(k)},\hat{b}^{(k)}\right)\in\mathcal{A}^{(k)}(\boldsymbol{z})\quad(k \in [K]),\label{eq:bilevel-upper-lowerlink}
\\
&\hat{b}^{(k)} \in \mathbb{R}, \quad \hat{\boldsymbol{w}}^{(k)} \in \mathbb{R}^p, \quad \boldsymbol{\xi}^{(k)} \in \mathbb{R}_{+}^{|\mathcal{V}_k|}\quad (k \in [K]), \quad \boldsymbol{z} \in \{0,1\}^p.
\label{eq:bilevel-upper-domain}
\end{align}
Problem \eqref{eq:bilevel-lower-objective}--\eqref{eq:bilevel-upper-domain} is a bilevel MIO problem: the upper-level problem minimizes the classification loss on the validation set, while the lower-level problem trains LS-SVM on the training set.
However, bilevel optimization problems are generally hard to solve directly \citep{colson2007bilevel,sinha2018bilevel}.

\subsection{Single-Level MIO Reformulation}\label{subsec:single-level}
In this section, we use the first-order optimality conditions of LS-SVM \eqref{eq:lssvm-stationary-w}--\eqref{eq:lssvm-stationary-b} derived in Section \ref{sec:svm} to reduce the bilevel problem \eqref{eq:bilevel-lower-objective}--\eqref{eq:bilevel-upper-domain} to a single-level MIO problem.

\begin{theorem}\label{thm:lower-level-unique}
When \(\gamma > 0\), for each \(k \in [K]\) and each \(\boldsymbol{z} \in \{0,1\}^p\), the lower-level problem \eqref{eq:bilevel-lower-objective}--\eqref{eq:bilevel-lower-constraint} has a unique optimal solution.
\end{theorem}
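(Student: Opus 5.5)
The plan is to restrict the lower-level problem \eqref{eq:bilevel-lower-objective}--\eqref{eq:bilevel-lower-constraint} to the active features. Once that is done, it becomes an unconstrained quadratic minimization whose Hessian is positive definite. Existence and uniqueness of the minimizer then follow from standard facts about strictly convex quadratics.

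\textbf{Reduction.} Fix $k\in[K]$ and $\boldsymbol{z}\in\{0,1\}^p$, and let $S:=\{j\in[p]\mid z_j=1\}$. Constraint \eqref{eq:bilevel-lower-constraint} says exactly that $\boldsymbol{w}^{(k)}$ lies in the linear subspace $\{\boldsymbol{w}\mid w_j=0\ (j\notin S)\}$. So the feasible set is parametrized by $(\boldsymbol{w}_S,b)\in\mathbb{R}^{|S|+1}$, where $\boldsymbol{w}_S=(w_j)_{j\in S}$ and the remaining coordinates are zero. Using $y_i^2=1$, as in the derivation of \eqref{eq:lssvm-stationary-w}--\eqref{eq:lssvm-stationary-b}, the objective becomes
\[
g(\boldsymbol{w}_S,b)=\frac12\|\boldsymbol{w}_S\|_2^2+\frac{\gamma}{2}\sum_{i\in\mathcal{T}_k}\bigl(y_i-\boldsymbol{w}_S^\top\boldsymbol{x}_{i,S}-b\bigr)^2 ,
\]
where $\boldsymbol{x}_{i,S}=(x_{ij})_{j\in S}$. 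This is a quadratic function on $\mathbb{R}^{|S|+1}$. If $S=\emptyset$, only $b$ remains and the same argument applies.

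\textbf{Positive definiteness.} The Hessian of $g$ is
\[
H=\begin{pmatrix}I_{|S|}&\boldsymbol{0}\\ \boldsymbol{0}^\top&0\end{pmatrix}+\gamma\sum_{i\in\mathcal{T}_k}\begin{pmatrix}\boldsymbol{x}_{i,S}\\1\end{pmatrix}\begin{pmatrix}\boldsymbol{x}_{i,S}\\1\end{pmatrix}^{\!\top}.
\]
For a nonzero direction $(\boldsymbol{u},c)$,
\[
(\boldsymbol{u},c)^\top H(\boldsymbol{u},c)=\|\boldsymbol{u}\|_2^2+\gamma\sum_{i\in\mathcal{T}_k}\bigl(\boldsymbol{u}^\top\boldsymbol{x}_{i,S}+c\bigr)^2 .
\]
I split into two cases:
\begin{itemize}
\item If $\boldsymbol{u}\neq\boldsymbol{0}$, the first term is strictly positive.
\item If $\boldsymbol{u}=\boldsymbol{0}$, then $c\neq0$ and the form equals $\gamma|\mathcal{T}_k|c^2$. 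This is strictly positive because $\gamma>0$ and $|\mathcal{T}_k|\ge1$.
\end{itemize}
Hence $H\succ0$, so $g$ is strictly convex and coercive.

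\textbf{Conclusion.} A coercive continuous function attains its minimum, and a strictly convex function has at most one minimizer. Equivalently, the stationarity system $H(\boldsymbol{w}_S,b)^\top=\boldsymbol{r}$, which is the restriction of \eqref{eq:lssvm-stationary-w}--\eqref{eq:lssvm-stationary-b} to $S$ and $\mathcal{T}_k$, has the unique solution $H^{-1}\boldsymbol{r}$. Padding with zeros outside $S$ gives the unique element of $\mathcal{A}^{(k)}(\boldsymbol{z})$.

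\textbf{Main obstacle.} The only delicate point is the bias $b$. It is not regularized, so strict convexity in $b$ must come from the loss term. This requires $\mathcal{T}_k\neq\emptyset$, which holds whenever $K\ge2$ and every fold is a proper subset of $[n]$, as in \eqref{eq:cv-partition}--\eqref{eq:training-validation-set}. I would state this standing assumption explicitly in the proof.
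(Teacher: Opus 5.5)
Your proposal is correct and follows the same route as the paper: restrict to \((\boldsymbol{w}^{(k)}_S, b^{(k)})\), show that the Hessian quadratic form \(\|\boldsymbol{u}\|_2^2+\gamma\sum_{i\in\mathcal{T}_k}(\boldsymbol{u}^\top\boldsymbol{x}_{i,S}+t)^2\) is positive for every nonzero direction, and conclude strong convexity and hence a unique minimizer. Your explicit case \(\boldsymbol{u}=\boldsymbol{0}\), which requires \(\mathcal{T}_k\neq\emptyset\), and your note on existence via coercivity spell out two points that the paper's one-line ``\(>0\)'' leaves implicit.
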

\begin{proof}
For fixed \(k \in [K]\) and \(\boldsymbol{z} \in \{0,1\}^p\), let \(S := \{j \in [p] \mid z_j = 1\}\).
By the constraint \eqref{eq:bilevel-lower-constraint}, the lower-level problem \eqref{eq:bilevel-lower-objective}--\eqref{eq:bilevel-lower-constraint} can be regarded as an unconstrained optimization problem in \((\boldsymbol{w}^{(k)}_S, b^{(k)})\), with the components of \(\boldsymbol{w}^{(k)}\) outside \(S\) fixed to zero.
The objective function then becomes
\[
\frac{1}{2}\|\boldsymbol{w}^{(k)}_S\|_2^2
+
\frac{\gamma}{2}
\sum_{i \in \mathcal{T}_k}
\left(
1 - y_i\bigl((\boldsymbol{w}^{(k)}_S)^\top \boldsymbol{x}_{i,S} + b^{(k)}\bigr)
\right)^2.
\]
For any \((\boldsymbol{u}, t) \neq (\boldsymbol{0}, 0)\), the Hessian \(H\) of this objective satisfies
\[
\begin{pmatrix}
\boldsymbol{u} \\
t
\end{pmatrix}^{\top}
H
\begin{pmatrix}
\boldsymbol{u} \\
t
\end{pmatrix}
=
\|\boldsymbol{u}\|_2^2
+
\gamma
\sum_{i \in \mathcal{T}_k}
\left(
\boldsymbol{u}^\top \boldsymbol{x}_{i,S} + t
\right)^2
>
0.
\]
Therefore, the objective function is strongly convex in \((\boldsymbol{w}^{(k)}_S, b^{(k)})\), and the lower-level problem has a unique optimal solution.
\end{proof}
\noindent
For each \(k \in [K]\), the first-order optimality conditions of the unconstrained LS-SVM trained on \(\mathcal{T}_k\) are given by
\begin{align}
\boldsymbol{w}^{(k)}
\;-\;
\gamma
\sum_{i \in \mathcal{T}_k}
\left(
y_i
- (\boldsymbol{w}^{(k)})^\top \boldsymbol{x}_i
- b^{(k)}
\right)
\boldsymbol{x}_i
&=
\boldsymbol{0},
\label{eq:single-level-stationary-w}
\\
\gamma
\sum_{i \in \mathcal{T}_k}
\left(
y_i
- (\boldsymbol{w}^{(k)})^\top \boldsymbol{x}_i
- b^{(k)}
\right)
&=
0.
\label{eq:single-level-stationary-b}
\end{align}

\begin{theorem}\label{thm:lssvm-optimality-equivalence}
For each \(k \in [K]\), suppose that \((\boldsymbol{w}^{(k)}, b^{(k)}, \boldsymbol{z})\) satisfies the constraint \eqref{eq:bilevel-lower-constraint}.
Then, \((\boldsymbol{w}^{(k)}, b^{(k)}) \in \mathcal{A}^{(k)}(\boldsymbol{z})\) if and only if the \(j\)-th component of \eqref{eq:single-level-stationary-w} holds for each \(j \in [p]\) with \(z_j = 1\), and \eqref{eq:single-level-stationary-b} holds.
\end{theorem}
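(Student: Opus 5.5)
The approach is to reduce the lower-level problem, as in the proof of Theorem~\ref{thm:lower-level-unique}, to an unconstrained problem in the free variables \((\boldsymbol{w}^{(k)}_S, b^{(k)})\), where \(S := \{j \in [p] \mid z_j = 1\}\), and then to use the fact that for a differentiable convex function, stationarity is necessary and sufficient for global optimality. Write
\[
F(\boldsymbol{w}_S, b) := \frac{1}{2}\|\boldsymbol{w}_S\|_2^2 + \frac{\gamma}{2}\sum_{i \in \mathcal{T}_k}\bigl(1 - y_i(\boldsymbol{w}_S^\top \boldsymbol{x}_{i,S} + b)\bigr)^2 .
\]
Because \((\boldsymbol{w}^{(k)}, b^{(k)}, \boldsymbol{z})\) satisfies \eqref{eq:bilevel-lower-constraint}, we have \(w^{(k)}_j = 0\) for \(j \notin S\). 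The lower-level objective evaluated at any feasible point therefore equals \(F\) evaluated at its \(S\)-components. It follows that \((\boldsymbol{w}^{(k)}, b^{(k)}) \in \mathcal{A}^{(k)}(\boldsymbol{z})\) if and only if \((\boldsymbol{w}^{(k)}_S, b^{(k)})\) minimizes \(F\) over \(\mathbb{R}^{|S|}\times\mathbb{R}\).

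The next step is to observe that \(F\) is a convex quadratic; in fact it is strongly convex by the Hessian computation in the proof of Theorem~\ref{thm:lower-level-unique}. Hence a point minimizes \(F\) if and only if \(\nabla F = \boldsymbol{0}\) there. For necessity this is Fermat's rule. For sufficiency we use the gradient inequality for convex differentiable functions, \(F(\boldsymbol{v}) \ge F(\boldsymbol{u}) + \nabla F(\boldsymbol{u})^\top(\boldsymbol{v}-\boldsymbol{u})\).

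It remains to compute \(\nabla F\) and match its components with \eqref{eq:single-level-stationary-w}--\eqref{eq:single-level-stationary-b}. Exactly as in Section~\ref{subsec:ls-svm}, we differentiate and use \(y_i^2 = 1\) to rewrite \(y_i\bigl(1 - y_i(\cdot)\bigr) = y_i - (\cdot)\). This gives
\[
\frac{\partial F}{\partial w_j} = w_j - \gamma\sum_{i\in\mathcal{T}_k}\bigl(y_i - \boldsymbol{w}_S^\top\boldsymbol{x}_{i,S} - b\bigr)x_{ij} \quad (j\in S),
\qquad
\frac{\partial F}{\partial b} = -\gamma\sum_{i\in\mathcal{T}_k}\bigl(y_i - \boldsymbol{w}_S^\top\boldsymbol{x}_{i,S} - b\bigr).
\]
Since \(w^{(k)}_j = 0\) off \(S\), we have \(\boldsymbol{w}_S^\top\boldsymbol{x}_{i,S} = (\boldsymbol{w}^{(k)})^\top\boldsymbol{x}_i\). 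The condition \(\partial F/\partial w_j = 0\) is therefore precisely the \(j\)-th component of \eqref{eq:single-level-stationary-w}, and \(\partial F/\partial b = 0\) is exactly \eqref{eq:single-level-stationary-b}. This establishes both directions.

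The main subtlety is bookkeeping rather than analysis. One must make explicit why the components \(j \notin S\) of \eqref{eq:single-level-stationary-w} are \emph{not} required. Those coordinates are not optimization variables, since they are fixed at zero, so no stationarity condition arises for them; in general they fail to hold, because \(w_j = 0\) while \(\gamma\sum_i(\cdot)x_{ij}\) need not vanish. The second point to state carefully is the identification of the residual after substituting the zero coordinates. Neither step requires \(\gamma>0\) beyond guaranteeing convexity, which already holds for \(\gamma \ge 0\).
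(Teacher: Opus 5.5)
Your proposal is correct and follows essentially the same route as the paper. Both arguments restrict to the free variables \((\boldsymbol{w}^{(k)}_S, b^{(k)})\), invoke the strong convexity from the proof of Theorem~\ref{thm:lower-level-unique} so that first-order conditions are necessary and sufficient, and match the partial derivatives with the \(j\)-th components of \eqref{eq:single-level-stationary-w} (\(j \in S\)) and with \eqref{eq:single-level-stationary-b} via \((\boldsymbol{w}^{(k)})^\top \boldsymbol{x}_i = (\boldsymbol{w}^{(k)}_S)^\top \boldsymbol{x}_{i,S}\). Your explicit gradient computation and your remark that plain convexity (hence \(\gamma \ge 0\)) suffices are correct additions that fill in details the paper leaves implicit.
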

\begin{proof}
Let \(S := \{j \in [p] \mid z_j = 1\}\).
Under the constraint \eqref{eq:bilevel-lower-constraint}, the lower-level problem \eqref{eq:bilevel-lower-objective}--\eqref{eq:bilevel-lower-constraint} can be rewritten as an unconstrained optimization problem in \((\boldsymbol{w}^{(k)}_S, b^{(k)})\).
As shown in the proof of Theorem \ref{thm:lower-level-unique}, the objective function is strongly convex in \((\boldsymbol{w}^{(k)}_S, b^{(k)})\), so the first-order optimality conditions are necessary and sufficient for global optimality.
Because \((\boldsymbol{w}^{(k)})^\top \boldsymbol{x}_i = (\boldsymbol{w}^{(k)}_S)^\top \boldsymbol{x}_{i,S}\) under \eqref{eq:bilevel-lower-constraint}, the first-order optimality condition for \(j \in S\) coincides with the \(j\)-th component of \eqref{eq:single-level-stationary-w}, and that for \(b^{(k)}\) coincides with \eqref{eq:single-level-stationary-b}.
Therefore, \((\boldsymbol{w}^{(k)}, b^{(k)}) \in \mathcal{A}^{(k)}(\boldsymbol{z})\) if and only if the \(j\)-th component of \eqref{eq:single-level-stationary-w} holds for each \(j \in [p]\) with \(z_j = 1\), and \eqref{eq:single-level-stationary-b} holds.
\end{proof}
\noindent
By Theorem \ref{thm:lssvm-optimality-equivalence}, for each \(j \in [p]\) with \(z_j = 1\), the \(j\)-th component of \eqref{eq:single-level-stationary-w},
\[
w_j^{(k)}
\;-\;
\gamma
\sum_{i \in \mathcal{T}_k}
\left(
y_i - (\boldsymbol{w}^{(k)})^\top \boldsymbol{x}_i - b^{(k)}
\right)
x_{ij}
=
0,
\]
holds, which can be rearranged as
\[
w_j^{(k)}
=
\gamma
\sum_{i \in \mathcal{T}_k}
\left(
y_i - (\boldsymbol{w}^{(k)})^\top \boldsymbol{x}_i - b^{(k)}
\right)
x_{ij}.
\]
In addition, rearranging \eqref{eq:single-level-stationary-b} yields
\[
b^{(k)}
=
\frac{
\sum_{i \in \mathcal{T}_k} y_i
- (\boldsymbol{w}^{(k)})^\top \sum_{i \in \mathcal{T}_k} \boldsymbol{x}_i
}{
|\mathcal{T}_k|
}.
\]
By adopting these expressions as constraints of a single-level problem, the bilevel problem with the classification-loss objective can be rewritten as the following single-level problem:

\begin{align}
\min \quad
&
\sum_{k \in [K]}
\sum_{i \in \mathcal{V}_k}
\xi_i^{(k)}
\label{eq:single-level-objective}
\\
\textrm{s.t.} \quad
&
y_i
\left(
(\boldsymbol{w}^{(k)})^\top \boldsymbol{x}_i + b^{(k)}
\right)
\ge
1 - \xi_i^{(k)}
\quad
(i \in \mathcal{V}_k,\ k \in [K]),
\label{eq:single-level-validation}
\\
&
z_j = 0
\Rightarrow
w_j^{(k)} = 0
\quad
(j \in [p],\ k \in [K]),
\label{eq:single-level-selection}
\\
& z_j = 1
\Rightarrow
w_j^{(k)}=\gamma\sum_{i \in \mathcal{T}_k}\left(y_i- (\boldsymbol{w}^{(k)})^\top \boldsymbol{x}_i- b^{(k)}\right)x_{ij}\label{eq:single-level-fo-w} \quad (j \in [p],\ k \in [K]), \\
&b^{(k)}
=
\frac{
\sum_{i \in \mathcal{T}_k} y_i
- (\boldsymbol{w}^{(k)})^\top \sum_{i \in \mathcal{T}_k} \boldsymbol{x}_i
}{
|\mathcal{T}_k|
}
\quad
(k \in [K]),
\label{eq:single-level-fo-b}
\\
&
b^{(k)} \in \mathbb{R},
\quad
\boldsymbol{w}^{(k)} \in \mathbb{R}^p,
\quad
\boldsymbol{\xi}^{(k)} \in \mathbb{R}_{+}^{|\mathcal{V}_k|}
\quad
(k \in [K]),
\quad
\boldsymbol{z} \in \{0,1\}^p.
\label{eq:single-level-domain}
\end{align}
An analogous argument holds when the upper-level objective is replaced with the residual sum of squares on the validation set, yielding the following formulation:

\begin{align}
\min \quad
&
\sum_{k \in [K]}
\sum_{i \in \mathcal{V}_k}
\left(1 - y_i\left((\boldsymbol{w}^{(k)})^\top \boldsymbol{x}_i + b^{(k)}\right)\right)^2
\label{eq:single-level-lssvm-objective}
\\
\textrm{s.t.} \quad
& z_j = 0 \Rightarrow w_j^{(k)} = 0 \quad (j \in [p],\ k \in [K]), \label{eq:single-level-lssvm-selection} \\
& z_j = 1 \Rightarrow w_j^{(k)} = \gamma \sum_{i \in \mathcal{T}_k} \left( y_i - (\boldsymbol{w}^{(k)})^\top \boldsymbol{x}_i - b^{(k)} \right) x_{ij}
\quad (j \in [p],\ k \in [K]), \label{eq:single-level-lssvm-fo-w}
\\
& b^{(k)} = \frac{\sum_{i \in \mathcal{T}_k} y_i - (\boldsymbol{w}^{(k)})^\top \sum_{i \in \mathcal{T}_k} \boldsymbol{x}_i}{|\mathcal{T}_k|}
\quad (k \in [K]), \label{eq:single-level-lssvm-fo-b} \\
& b^{(k)} \in \mathbb{R}, \quad \boldsymbol{w}^{(k)} \in \mathbb{R}^p \quad (k \in [K]), \quad \boldsymbol{z} \in \{0,1\}^p. \label{eq:single-level-lssvm-domain}
\end{align}

Problem \eqref{eq:single-level-objective}--\eqref{eq:single-level-domain} is a single-level mixed-integer linear program (MILP), and problem \eqref{eq:single-level-lssvm-objective}--\eqref{eq:single-level-lssvm-domain} is a single-level mixed-integer quadratic program (MIQP).
Both can be solved by general-purpose optimization solvers.

\section{Simulation Experiments}\label{sec:experiments}

\subsection{Experimental Design}\label{subsec:design}

In this section, we evaluate the effectiveness of the proposed framework through numerical experiments on synthetic data.
We set the number of candidate features to \(p = 20\), the number of training samples to \(n = 100\), and use three signal-to-noise ratio (SNR) levels, \(\{0.25, 1.0, 4.0\}\).
Prior studies in regression have reported that the relative performance of feature selection methods depends on the SNR \citep{hastie2017extended}, so we compare the methods at multiple SNR levels.
We generate the synthetic data following prior work \citep{bertsimas2016bestsubset,takano2020bestsubsetcv}, using the procedure below.

\begin{enumerate}
\item We set the number of true features to 10, and define the true coefficient vector as
\[
\boldsymbol{w}^{*}
:=
(1,0,1,0,\ldots,1,0)^\top \in \mathbb{R}^{p}.
\]

\item
Each feature vector \(\boldsymbol{x}_i\) is drawn from the multivariate normal distribution \(\boldsymbol{x}_i \sim N(\boldsymbol{0}, \Sigma)\) with zero mean and covariance matrix \(\Sigma := (\sigma_{jj'})_{(j,j') \in [p] \times [p]}\), where each entry is set to \(\sigma_{jj'} = 0.35^{|j-j'|}\).

\item
The binary class labels are generated as follows.
First, for each sample we generate a continuous value \(t_i := \boldsymbol{w}^{*\top}\boldsymbol{x}_i + \varepsilon_i\), where the noise term \(\varepsilon_i\) follows \(N(0, \sigma^2)\) and the variance \(\sigma^2\) is set so that the specified SNR is achieved.
The SNR is defined as
\(\mathrm{SNR} = \mathrm{Var}(\boldsymbol{w}^{*\top}\boldsymbol{x}) / \mathrm{Var}(\varepsilon) = (\boldsymbol{w}^{*\top}\Sigma \boldsymbol{w}^*) / \sigma^2\).
Finally, we define the binary labels based on the sign of the continuous values:
\[
y_i =
\begin{cases}
+1, & t_i \ge 0, \\
-1, & t_i < 0.
\end{cases}
\]
\end{enumerate}

\subsection{Performance Metrics}\label{subsec:metrics}

Let \(\hat{\boldsymbol{z}} \in \{0,1\}^p\) denote the vector representing the selected features, and define the vector of true features \(\boldsymbol{z}^{*} \in \{0,1\}^p\) as
\[
z_j^{*}=
\begin{cases}
1, & w_j^{*} \neq 0,\\
0, & w_j^{*} = 0
\end{cases}
\quad
(j \in [p]).
\]

\begin{itemize}
\item
\textbf{Area under the ROC curve (AUC)}:
We use the AUC of the receiver operating characteristic (ROC) curve based on the decision function \(f(\boldsymbol{x})\) as a metric for classification accuracy \citep{fawcett2006roc}.
\item
\textbf{F1 score}:
To evaluate feature selection accuracy, we use the F1 score, the harmonic mean of the precision \(\mathrm{Precision}:=((\boldsymbol{z}^{*})^\top \hat{\boldsymbol{z}}) / (\boldsymbol{1}^\top \hat{\boldsymbol{z}})\) and the recall \(\mathrm{Recall}:=((\boldsymbol{z}^{*})^\top \hat{\boldsymbol{z}}) / (\boldsymbol{1}^\top \boldsymbol{z}^{*})\) \citep{powers2011evaluation}:
\[
\mathrm{F1}:=\frac{2 \cdot \mathrm{Precision} \cdot \mathrm{Recall}}{\mathrm{Precision}+\mathrm{Recall}}.
\]
\item
\textbf{Number of nonzeros}:
We compare the number of selected features to check whether each method tends to over-select or under-select features:
\[
\mathrm{Number\ of\ nonzeros}:=\boldsymbol{1}^\top \hat{\boldsymbol{z}}.
\]
\end{itemize}

\noindent
We compare the following methods.
Hereafter, we refer to the two proposed methods based on the cross-validation (CV) criterion as CV-SVM and CV-LS-SVM.

\begin{itemize}
\item
\textbf{L1-SVM}: SVM with $L_1$-regularization \citep{weston2003zeronorm};
\item
\textbf{SVM-RFE}: recursive feature elimination \citep{guyon2002gene};
\item
\textbf{LR-AIC}: logistic regression with feature subset selection based on AIC \citep{sato2016logisticmio};
\item
\textbf{LR-BIC}: logistic regression with feature subset selection based on BIC \citep{sato2016logisticmio};
\item
\textbf{CV-SVM}: the proposed method that minimizes the classification loss in the upper-level problem, given by \eqref{eq:single-level-objective}--\eqref{eq:single-level-domain}; and
\item
\textbf{CV-LS-SVM}: the proposed method that minimizes the sum of squared errors in the upper-level problem, given by \eqref{eq:single-level-lssvm-objective}--\eqref{eq:single-level-lssvm-domain}.
\end{itemize}

\subsection{Simulation Environment}\label{subsec:simenv}

All experiments were conducted on macOS with an Apple M3 Pro central processing unit (CPU, 12 cores) and 36\,GB of memory, and all methods were implemented in Python 3.13.
The MIO problems in the proposed methods (CV-SVM, CV-LS-SVM) and in LR-AIC and LR-BIC were solved with Gurobi Optimizer 12.0.3 through gurobipy.
The indicator constraints in the proposed formulations (e.g., \(z_j = 0 \Rightarrow w_j^{(k)} = 0\)) were implemented using Gurobi's native indicator constraint functionality.
L1-SVM and SVM-RFE were implemented using scikit-learn 1.7.2 \citep{scikit-learn}.
We set a 300-second time limit for the mixed-integer optimization; when the solver did not terminate within the time limit, we used the best feasible solution obtained at that point.
To assess statistical variability, we regenerated the synthetic data with 5 different random seeds, thereby obtaining 5 trials.
The proposed methods (CV-SVM, CV-LS-SVM) used 5-fold cross-validation \citep{arlot2010cvsurvey} (i.e., \(K := 5\)).
For the hyperparameter \(\gamma\) of the proposed methods, for each random seed and each \(\gamma \in \{100, 300, 500, 700, 1000\}\), we solved the corresponding MIO problem once.
Because the proposed formulations already include internal 5-fold cross-validation, we did not perform an additional outer cross-validation procedure for hyperparameter tuning.
We then selected the value of \(\gamma\) that minimizes the average objective value (cross-validation criterion) over the 5 seeds.
This selection of \(\gamma\) is based solely on the cross-validation criterion computed on the training and validation sets, and does not use the test data.
In contrast, for L1-SVM and SVM-RFE, for each candidate \(C \in \{0.001, 0.01, 0.1, 1, 10, 100, 1000\}\) and each random seed, we carried out 5-fold cross-validation on the corresponding training data and computed the classification accuracy.
We selected the value of \(C\) that maximized the average classification accuracy over the 5 seeds.
For LR-AIC and LR-BIC, the penalty coefficients are fixed to \(2\) and \(\log n\), respectively \citep{akaike1974aic,schwarz1978bic}, so no additional hyperparameter tuning was performed.
For L1-SVM and SVM-RFE, after selecting the hyperparameter, we retrained the model on the full training data for each seed.
For LR-AIC and LR-BIC, we used the models obtained during the MIO solve.
For the proposed methods (CV-SVM and CV-LS-SVM), after selecting \(\gamma\), we used the solution obtained for that \(\gamma\) on each seed.
Following \citet{takano2020bestsubsetcv}, we averaged the \(K\) coefficient vectors obtained for each \(k \in [K]\) and used \(\hat{\boldsymbol{w}} = (\sum_{k \in [K]} \hat{\boldsymbol{w}}^{(k)})/K\) and \(\hat{b} = (\sum_{k \in [K]} \hat{b}^{(k)})/K\) as the final model.
For each trial, we first generated 20,000 samples according to the procedure above, used 100 samples as the training data, and used the remaining 19,900 samples as the test data.
Each method was evaluated on this test set, and we compared the mean and standard error over the 5 trials.

\subsection{Results}\label{subsec:results}

Figures \ref{fig:results-snr025}--\ref{fig:results-snr40} show the experimental results for SNR \(=0.25\), \(1.0\), and \(4.0\), respectively.

Figure \ref{fig:results-snr025} shows the results under the strong-noise condition (SNR \(=0.25\)).
L1-SVM achieved the best performance, with AUC \(=0.65\) and F1 score \(=0.67\).
However, it selected 20.0 features, i.e., all candidate features.
In contrast, CV-SVM showed the best performance among the proposed methods, with AUC \(=0.63\) and F1 score \(=0.59\).
It selected 8.4 features, which is relatively close to the true number of 10, and achieved a good balance between classification performance and feature selection accuracy.
On the other hand, CV-LS-SVM had AUC \(=0.62\), F1 score \(=0.37\), and 5.2 selected features, showing a strong tendency toward under-selection under this condition.
LR-BIC selected only 1.8 features and performed poorly, with AUC \(=0.57\) and F1 score \(=0.16\).
This indicates that LR-BIC has a strong tendency to remove too many features.

Figure \ref{fig:results-snr10} shows the results under the moderate-noise condition (SNR \(=1.0\)).
CV-SVM achieved the best AUC (\(=0.77\)), tying with L1-SVM, and the highest F1 score (\(=0.72\)).
Its number of selected features, 10.6, is close to the true number.
CV-LS-SVM also produced stable results with an F1 score of \(0.70\) and 9.4 selected features.
Meanwhile, L1-SVM again selected 20.0 features, continuing its over-selection tendency.
LR-BIC selected 4.4 features with an F1 score of \(0.52\), still showing under-selection.

Figure \ref{fig:results-snr40} shows the results under the weak-noise condition (SNR \(=4.0\)).
L1-SVM achieved the highest classification performance, with AUC \(=0.90\), but it still selected 19.4 features, indicating over-selection even under this condition.
In contrast, CV-LS-SVM achieved the highest feature selection F1 score, \(0.84\), and selected 10.6 features.
CV-SVM maintained high classification performance with AUC \(=0.89\), but its F1 score was \(0.81\) and it selected 12.2 features, indicating that CV-LS-SVM provided a sparser and more stable solution.
LR-AIC also produced relatively good results, with F1 score \(=0.82\) and 11.8 selected features, but CV-LS-SVM yielded a solution closer to the true number of 10 features.
Thus, although the preferable method between CV-SVM and CV-LS-SVM depends on the noise level, both methods are promising in balancing classification performance and feature selection accuracy compared with the existing methods.

\begin{figure}[!tb]
\centering
\begin{subfigure}[t]{0.32\textwidth}
\centering
\includegraphics[width=\linewidth]{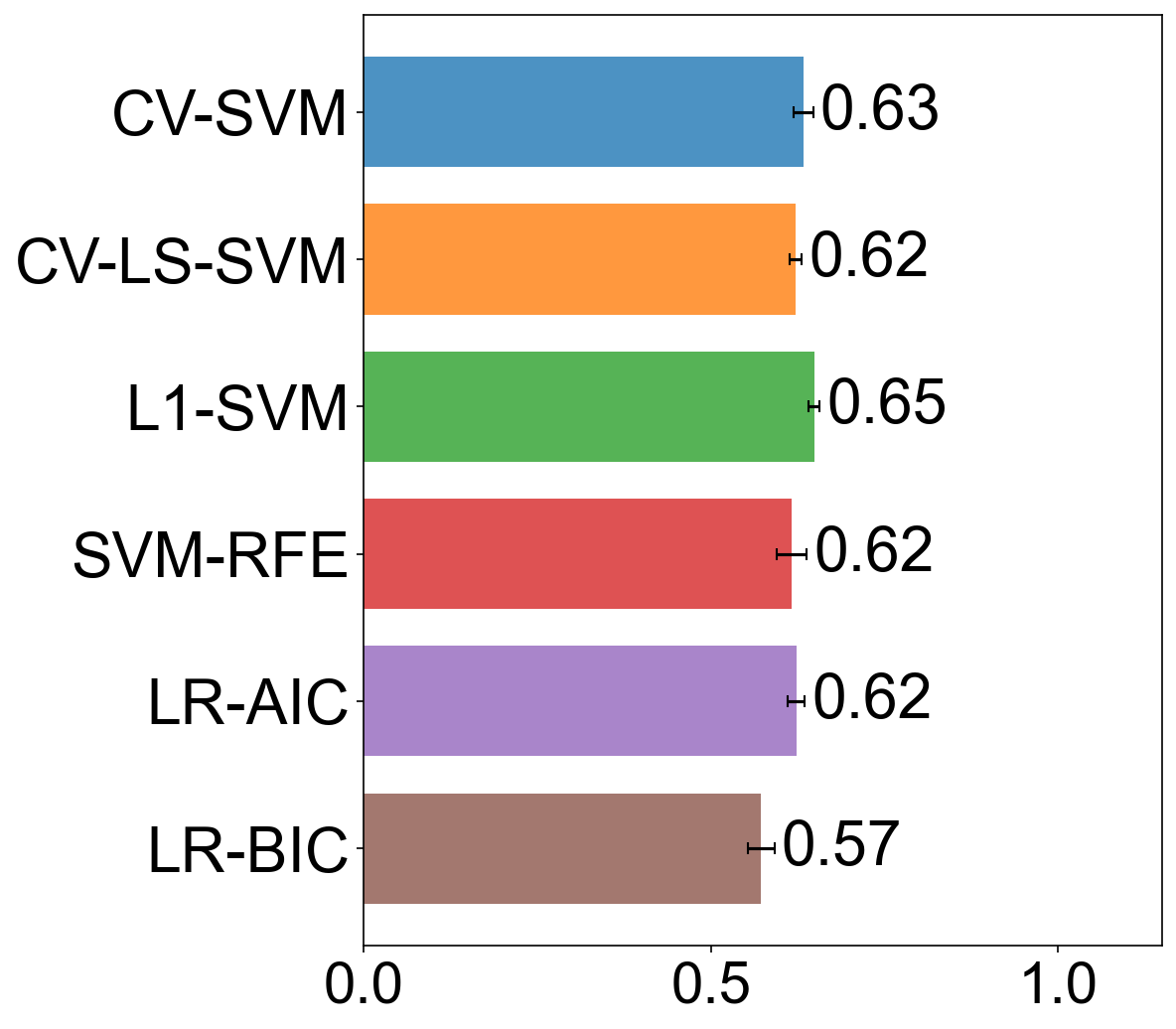}
\caption{AUC}
\end{subfigure}\hfill
\begin{subfigure}[t]{0.32\textwidth}
\centering
\includegraphics[width=\linewidth]{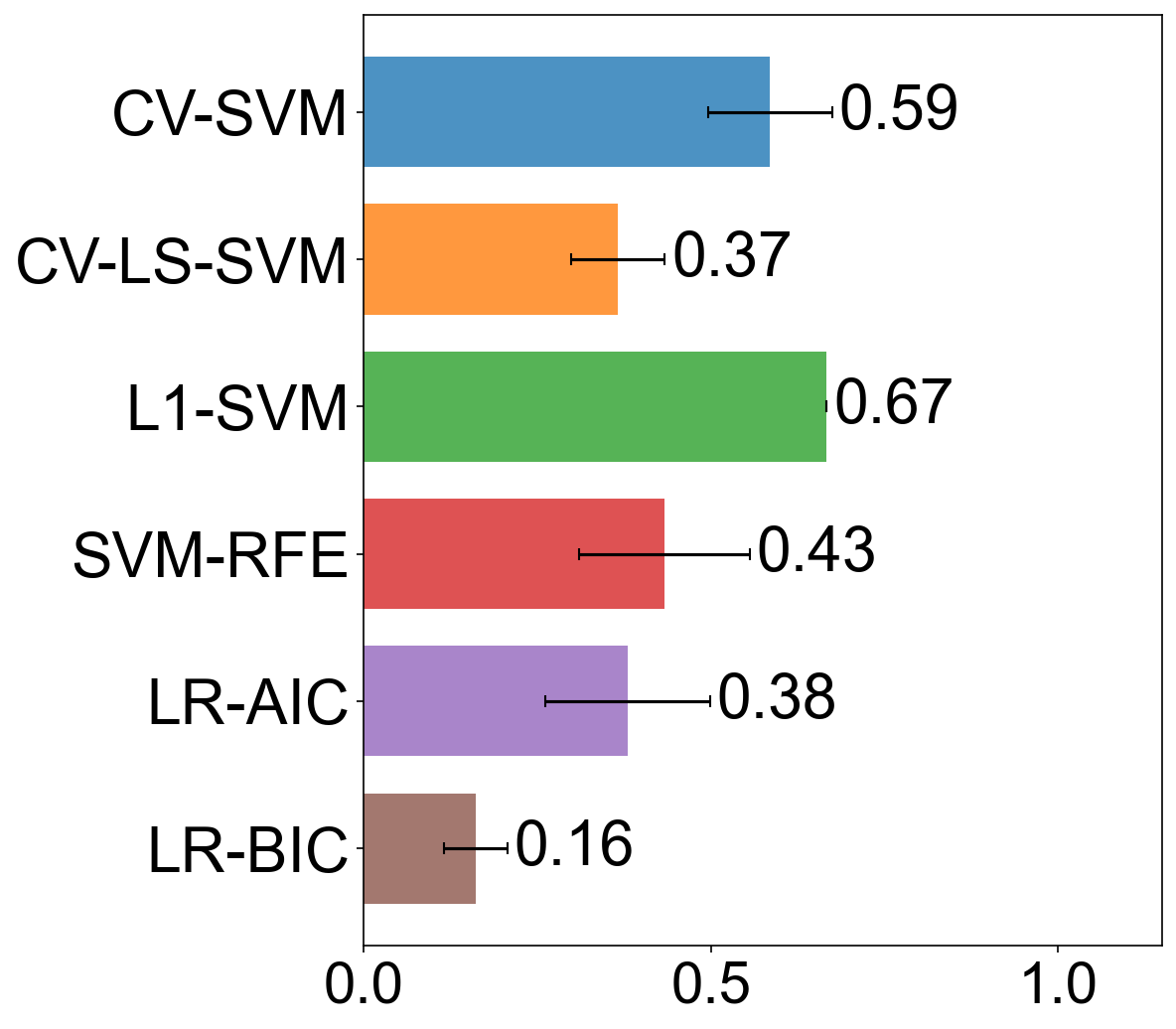}
\caption{F1 score}
\end{subfigure}\hfill
\begin{subfigure}[t]{0.32\textwidth}
\centering
\includegraphics[width=\linewidth]{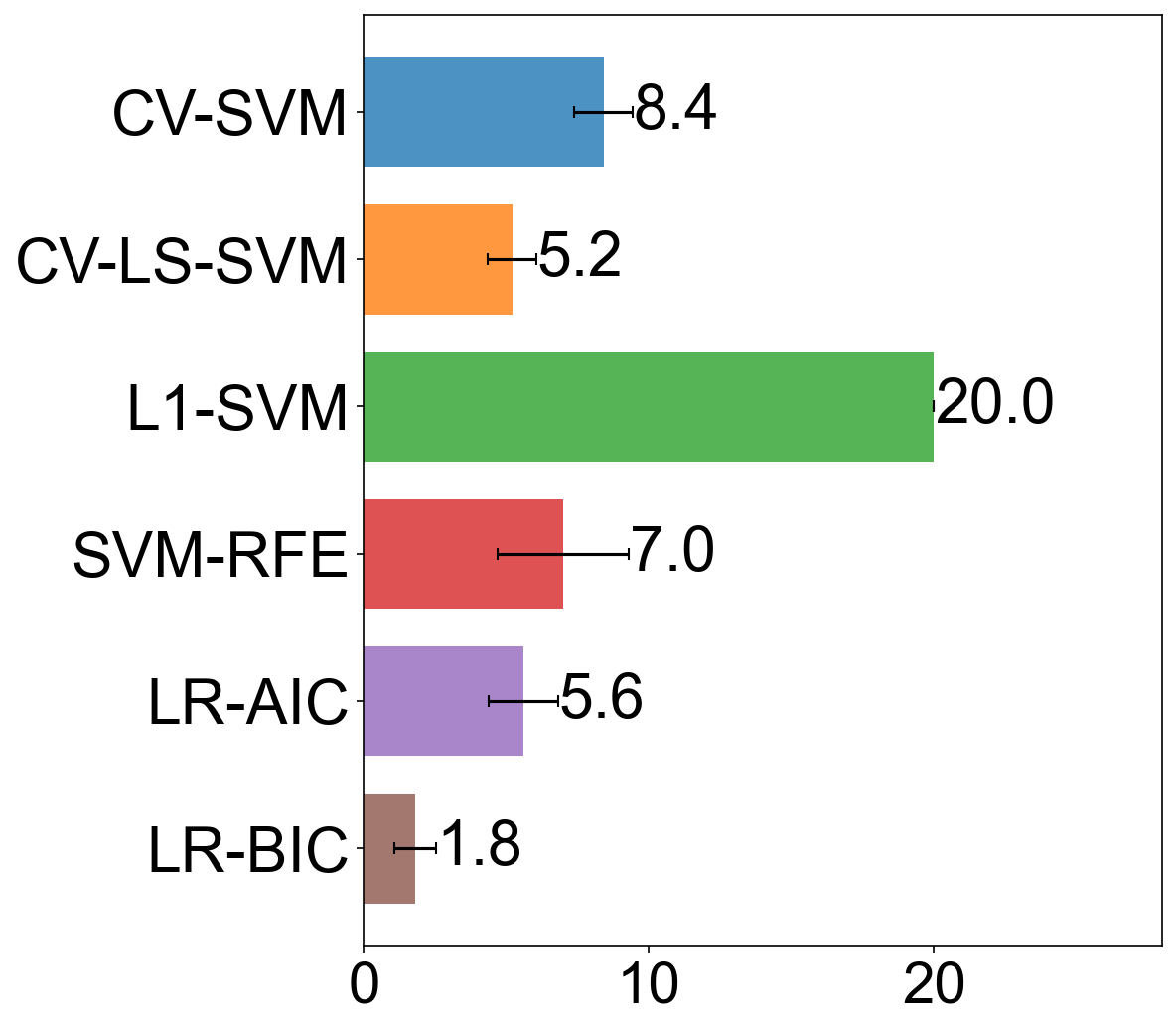}
\caption{Number of nonzeros}
\end{subfigure}
\caption{Experimental results: SNR \(=0.25\).}
\label{fig:results-snr025}
\end{figure}
\begin{figure}[!tb]
\centering
\begin{subfigure}[t]{0.32\textwidth}
\centering
\includegraphics[width=\linewidth]{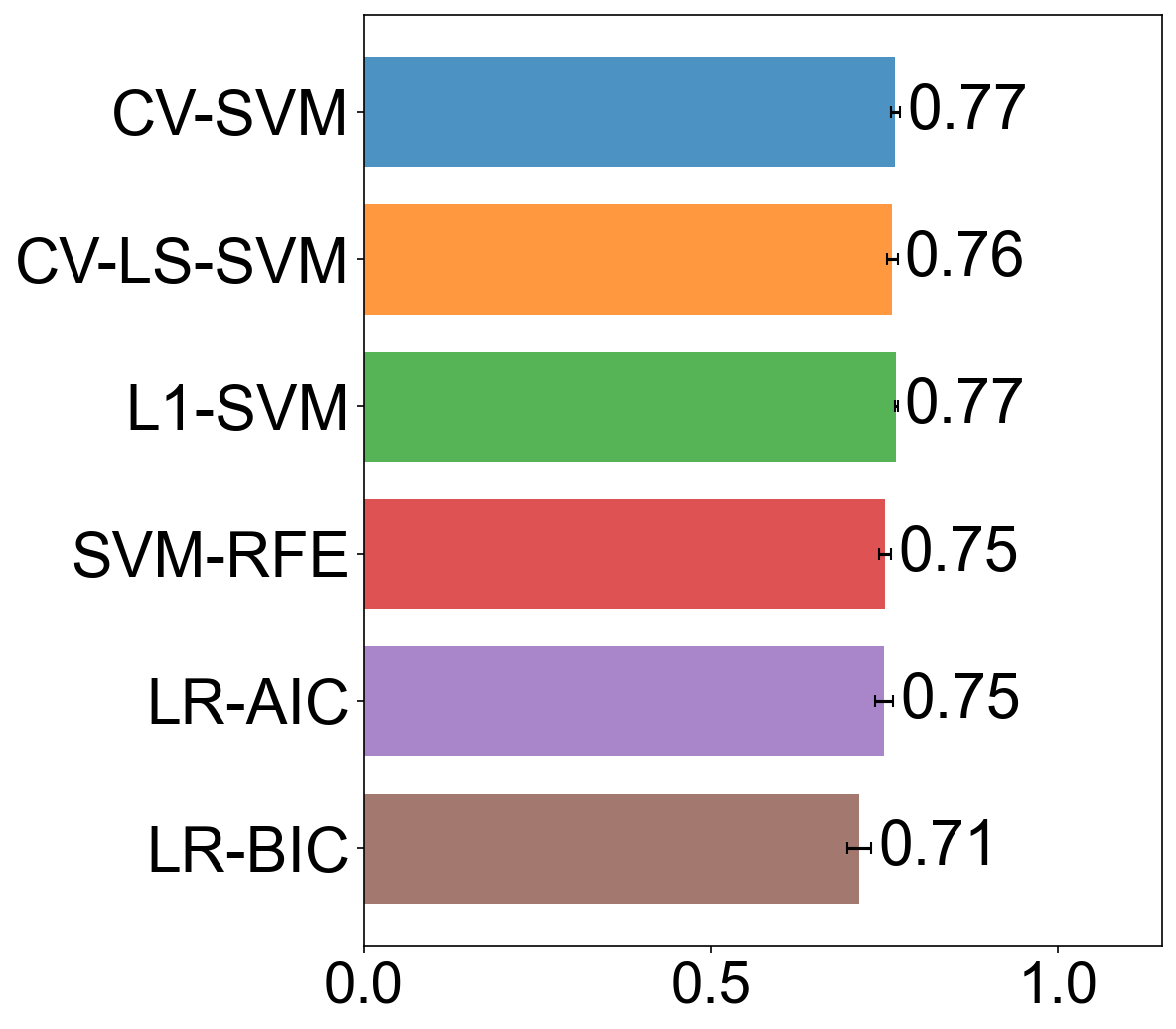}
\caption{AUC}
\end{subfigure}\hfill
\begin{subfigure}[t]{0.32\textwidth}
\centering
\includegraphics[width=\linewidth]{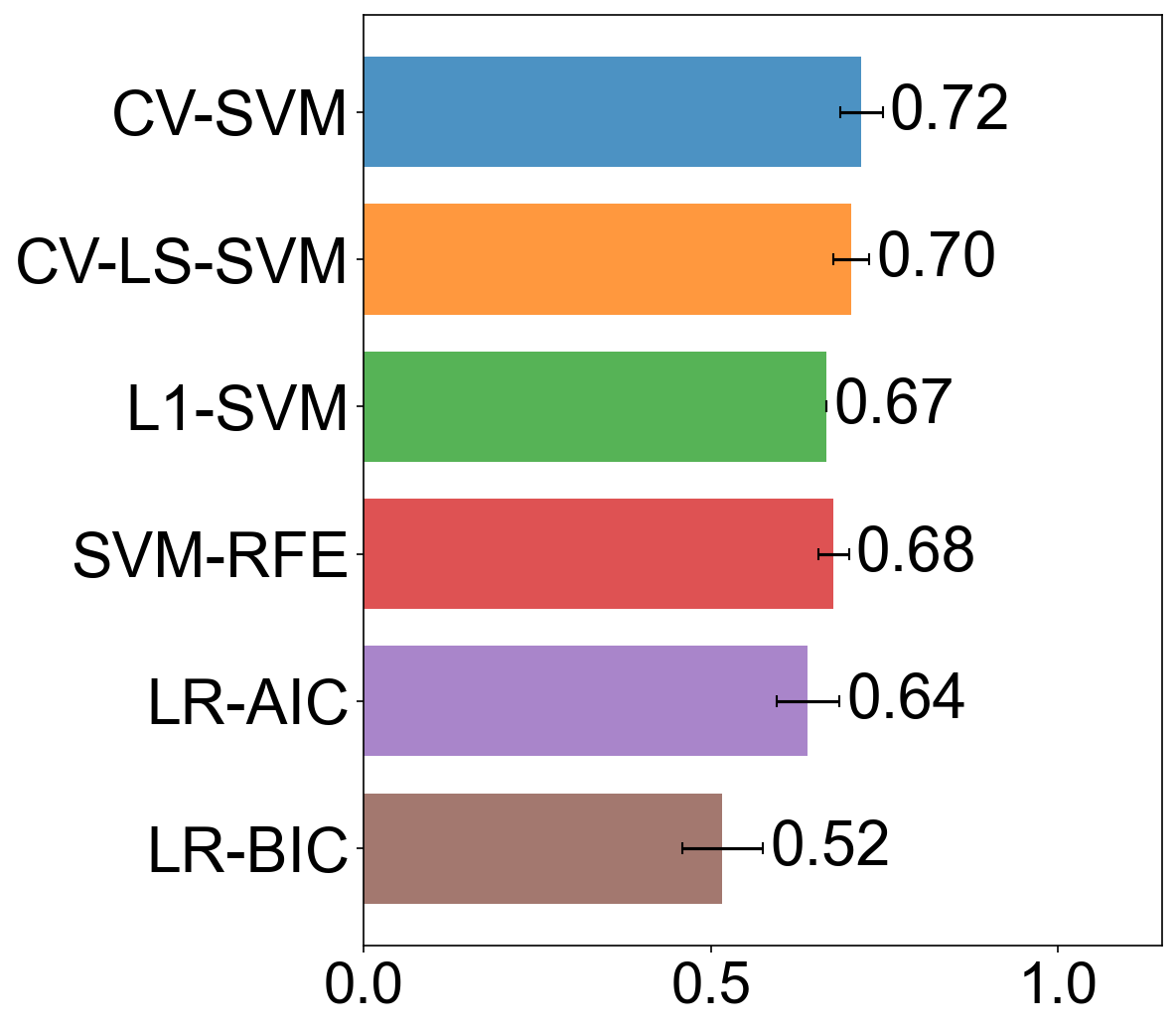}
\caption{F1 score}
\end{subfigure}\hfill
\begin{subfigure}[t]{0.32\textwidth}
\centering
\includegraphics[width=\linewidth]{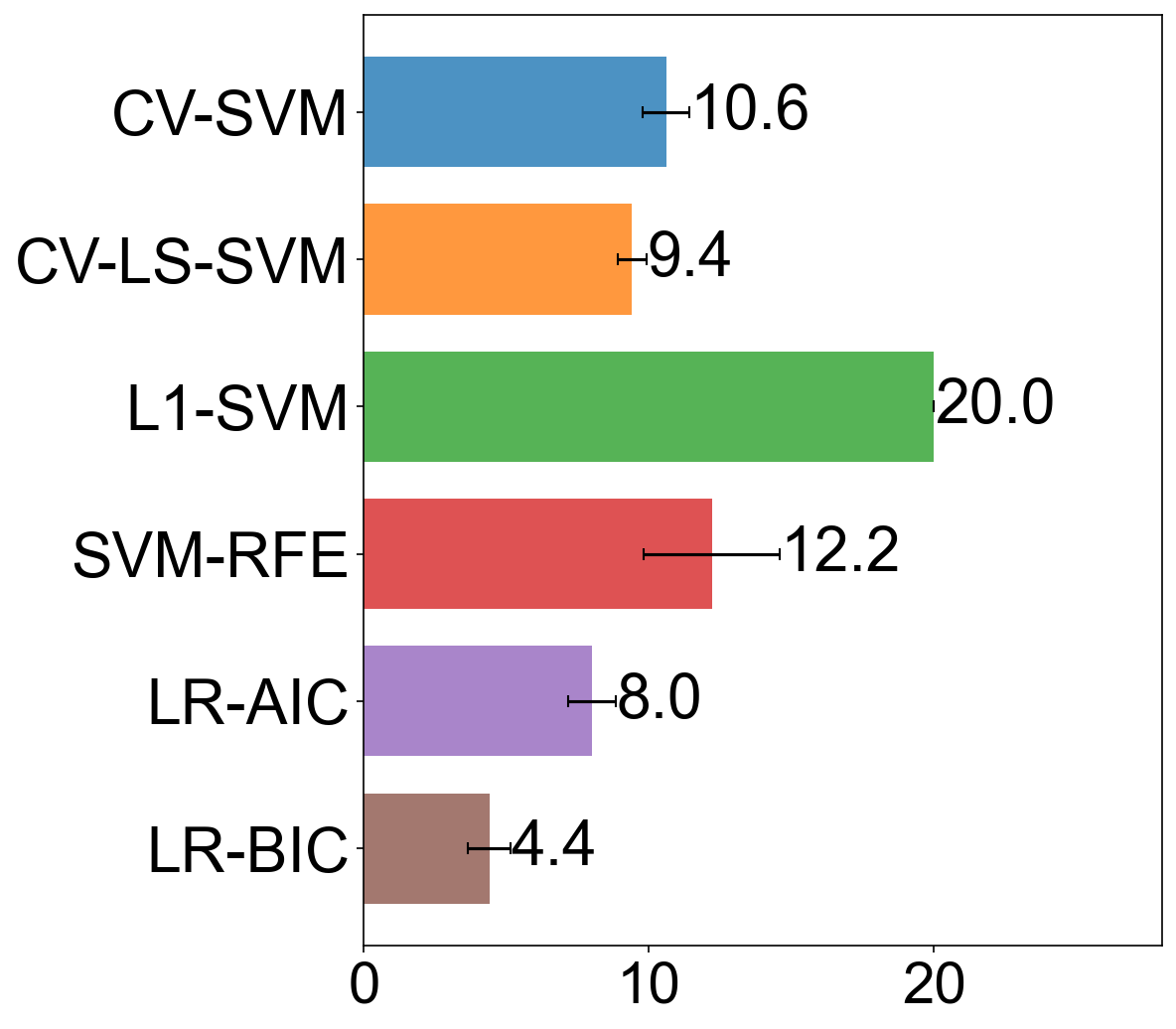}
\caption{Number of nonzeros}
\end{subfigure}
\caption{Experimental results: SNR \(=1.0\).}
\label{fig:results-snr10}
\end{figure}
\begin{figure}[!tb]
\centering
\begin{subfigure}[t]{0.32\textwidth}
\centering
\includegraphics[width=\linewidth]{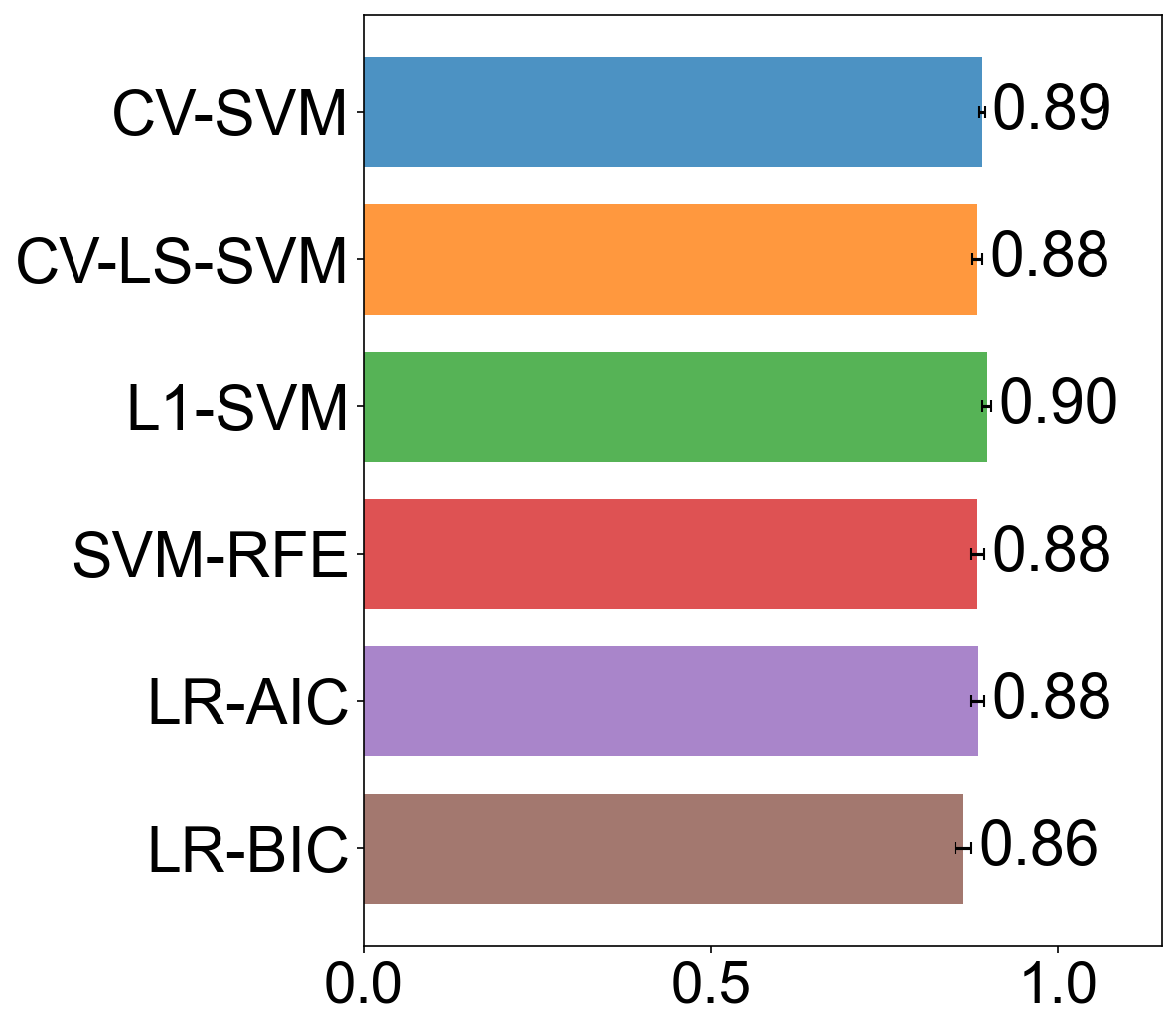}
\caption{AUC}
\end{subfigure}\hfill
\begin{subfigure}[t]{0.32\textwidth}
\centering
\includegraphics[width=\linewidth]{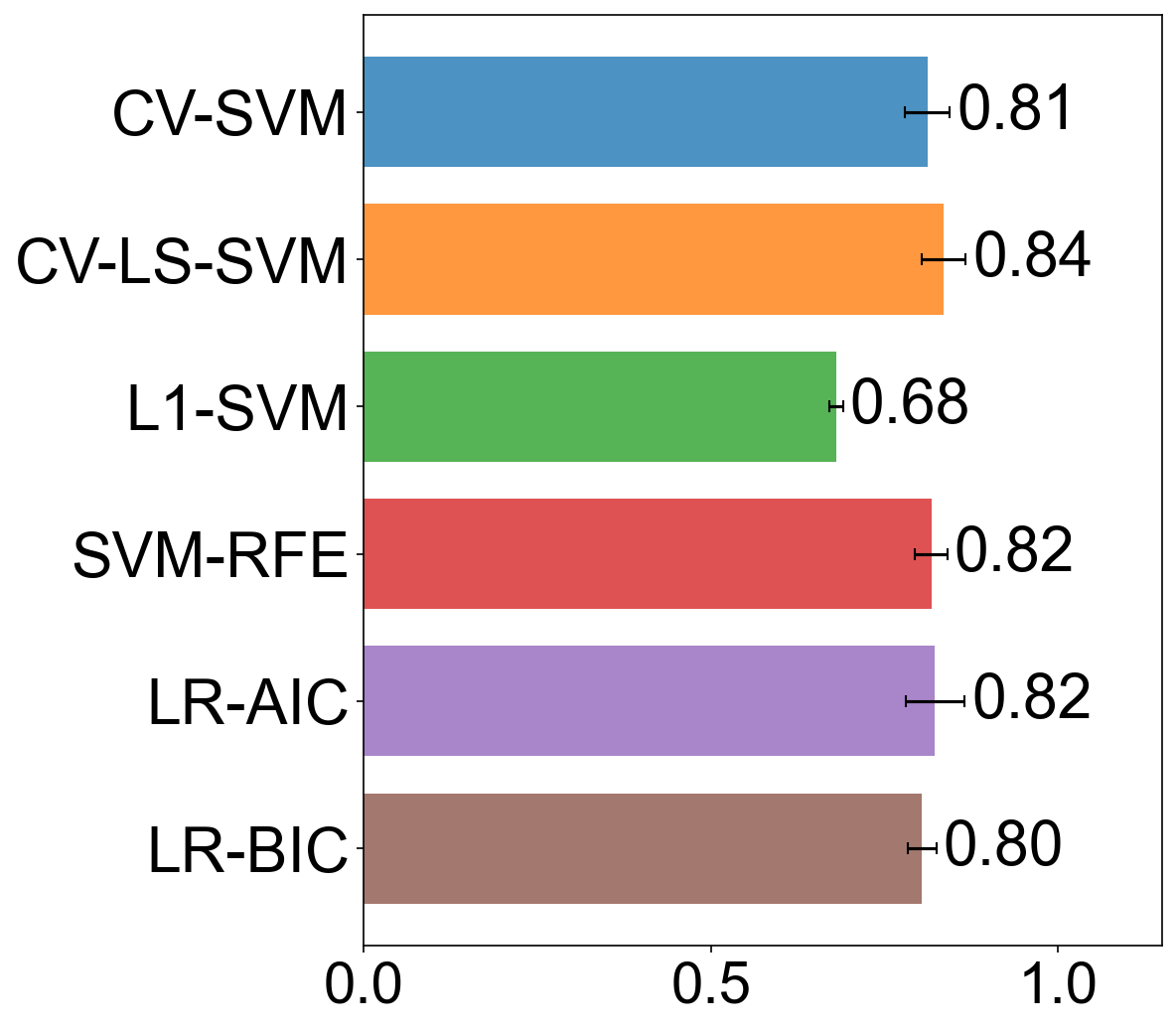}
\caption{F1 score}
\end{subfigure}\hfill
\begin{subfigure}[t]{0.32\textwidth}
\centering
\includegraphics[width=\linewidth]{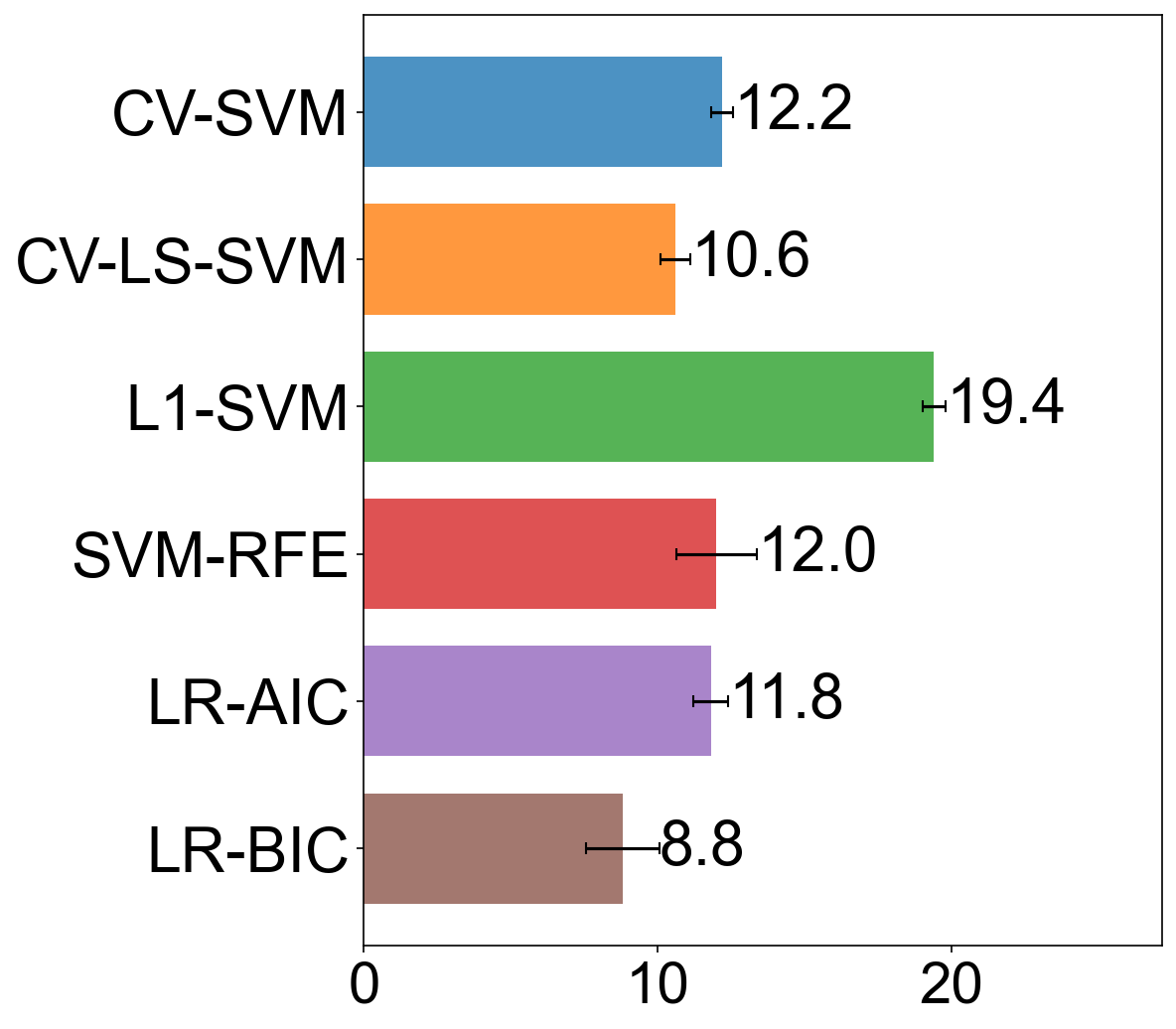}
\caption{Number of nonzeros}
\end{subfigure}
\caption{Experimental results: SNR \(=4.0\).}
\label{fig:results-snr40}
\end{figure}

\FloatBarrier
\subsection{Discussion}\label{subsec:discussion}

Overall, the proposed methods CV-SVM and CV-LS-SVM can be regarded as complementary methods whose roles differ depending on the noise level.
CV-SVM achieved a good balance between classification performance and feature selection accuracy under the strong-noise and moderate-noise conditions, whereas CV-LS-SVM stably produced sparser solutions closer to the true number of features under the weak-noise condition.
These results suggest that, unlike methods that perform feature selection indirectly through regularization or sequential elimination, the framework of directly evaluating the feature subset itself on the basis of the cross-validation criterion works effectively.

Among the existing methods, L1-SVM achieved high AUC across all SNR conditions, but it did not sufficiently reduce the number of selected features and showed a tendency toward over-selection.
This is consistent with prior findings that $L_1$-regularization tends to over-select features under high-SNR conditions \citep{hastie2017extended}.
In contrast, LR-BIC showed a tendency toward under-selection under the strong-noise and moderate-noise conditions, and under the strong-noise condition both classification performance and feature selection accuracy deteriorated.
Therefore, from the perspective of balancing classification performance and sparsity, the advantage of the proposed methods is more evident.

Comparing the two proposed methods, CV-SVM performed relatively better under the strong-noise and moderate-noise conditions, whereas CV-LS-SVM tended to provide sparser and more stable solutions under the weak-noise condition.
This difference likely reflects the different feature selection criteria used in the upper-level problem.
Specifically, under high-noise conditions, squared residuals are sensitive to outliers and mislabeled samples, so the criterion of CV-SVM, which directly evaluates classification success, may have an advantage over the squared-residual-based criterion of CV-LS-SVM.
Under low-noise conditions, the class structure is relatively clear, so even the squared-residual-based evaluation can capture feature relevance well, which likely explains the superior performance of CV-LS-SVM.

We finally discuss the computational cost of the proposed methods.
Table \ref{tab:computation-time} reports the mean computation times and standard errors for each method.
CV-SVM and CV-LS-SVM require more computation time than the regularization-based or sequential-search methods (L1-SVM, SVM-RFE), because they not only solve combinatorial optimization problems to identify the feature subset directly but also perform internal \(K\)-fold cross-validation to evaluate each candidate subset.
LR-AIC and LR-BIC are also based on combinatorial optimization, but they can evaluate information criteria from a single model fit, whereas the proposed methods must repeatedly train and validate across folds through cross-validation, which leads to a larger computational burden.
Thus, the computation time of the proposed methods is strongly affected not only by the MIO solve itself but also by the need to carry out internal cross-validation.
However, the solver completed within Gurobi's 300-second time limit in all SNR conditions, confirming that the proposed methods are at least feasible for the scale of synthetic datasets considered in this study.

Comparing the two proposed methods, CV-LS-SVM consistently requires less average computation time than CV-SVM.
Examining the evolution of the upper and lower bounds in Figure \ref{fig:convergence-curves}, we see that CV-LS-SVM improves its lower bound faster and stabilizes earlier than CV-SVM.
This suggests that CV-LS-SVM enables more effective pruning in branch-and-bound.
On the other hand, under the strong-noise and moderate-noise conditions, CV-SVM shows a larger improvement in the upper bound than CV-LS-SVM, indicating that it finds better feasible solutions.
This is consistent with the fact that CV-SVM showed relatively favorable results in both classification performance and feature selection accuracy under these conditions.
Therefore, while CV-LS-SVM improves optimality guarantees more efficiently, CV-SVM may be more likely to discover better solutions when the noise level is high.

\begin{center}
\captionof{table}{Mean computation time and standard error (in seconds) for each method.}
\label{tab:computation-time}
{\scriptsize
\setlength{\tabcolsep}{3pt}
\begin{tabular}{@{}lrrrrrr@{}}
\toprule
\textbf{SNR} & \multicolumn{1}{l}{\textbf{CV-SVM}} & \multicolumn{1}{l}{\textbf{CV-LS-SVM}} & \multicolumn{1}{l}{\textbf{L1-SVM}} & \multicolumn{1}{l}{\textbf{SVM-RFE}} & \multicolumn{1}{l}{\textbf{LR-AIC}} & \multicolumn{1}{l}{\textbf{LR-BIC}} \\ \midrule
0.25         & $32.18 \pm 6.83$                        & $9.22 \pm 1.32$                            & \shortstack{$4.13 \pm 0.36$\\$\times 10^{-3}$} & $25.92 \pm 5.02$                         & $1.74 \pm 0.79$                         & $1.54 \pm 1.66$                         \\
1.00         & $19.34 \pm 4.94$                        & $6.48 \pm 1.88$                            & \shortstack{$4.61 \pm 0.81$\\$\times 10^{-3}$} & $0.09 \pm 0.01$                          & $1.04 \pm 0.55$                         & $1.46 \pm 0.39$                         \\
4.00         & $9.57 \pm 3.15$                         & $4.01 \pm 1.05$                            & \shortstack{$5.39 \pm 1.18$\\$\times 10^{-3}$} & $0.15 \pm 0.03$                          & $0.49 \pm 0.24$                         & $1.00 \pm 0.49$                         \\ \bottomrule
\end{tabular}
}
\end{center}

\begin{figure}[!tb]
\centering
\begin{subfigure}[t]{0.75\textwidth}
\centering
\includegraphics[width=\linewidth]{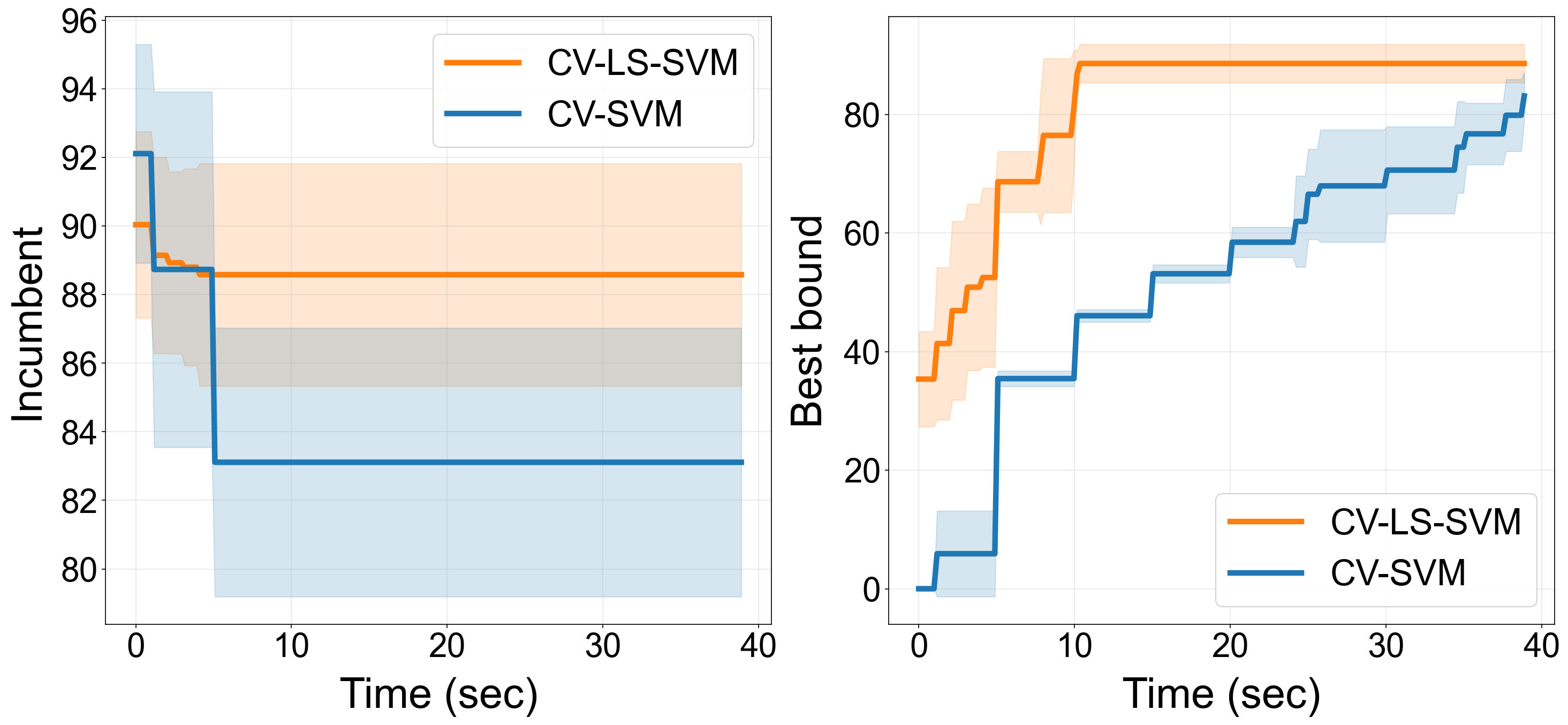}
\caption{SNR \(=0.25\)}
\label{fig:conv-snr025}
\end{subfigure}

\vspace{0.2em}

\begin{subfigure}[t]{0.75\textwidth}
\centering
\includegraphics[width=\linewidth]{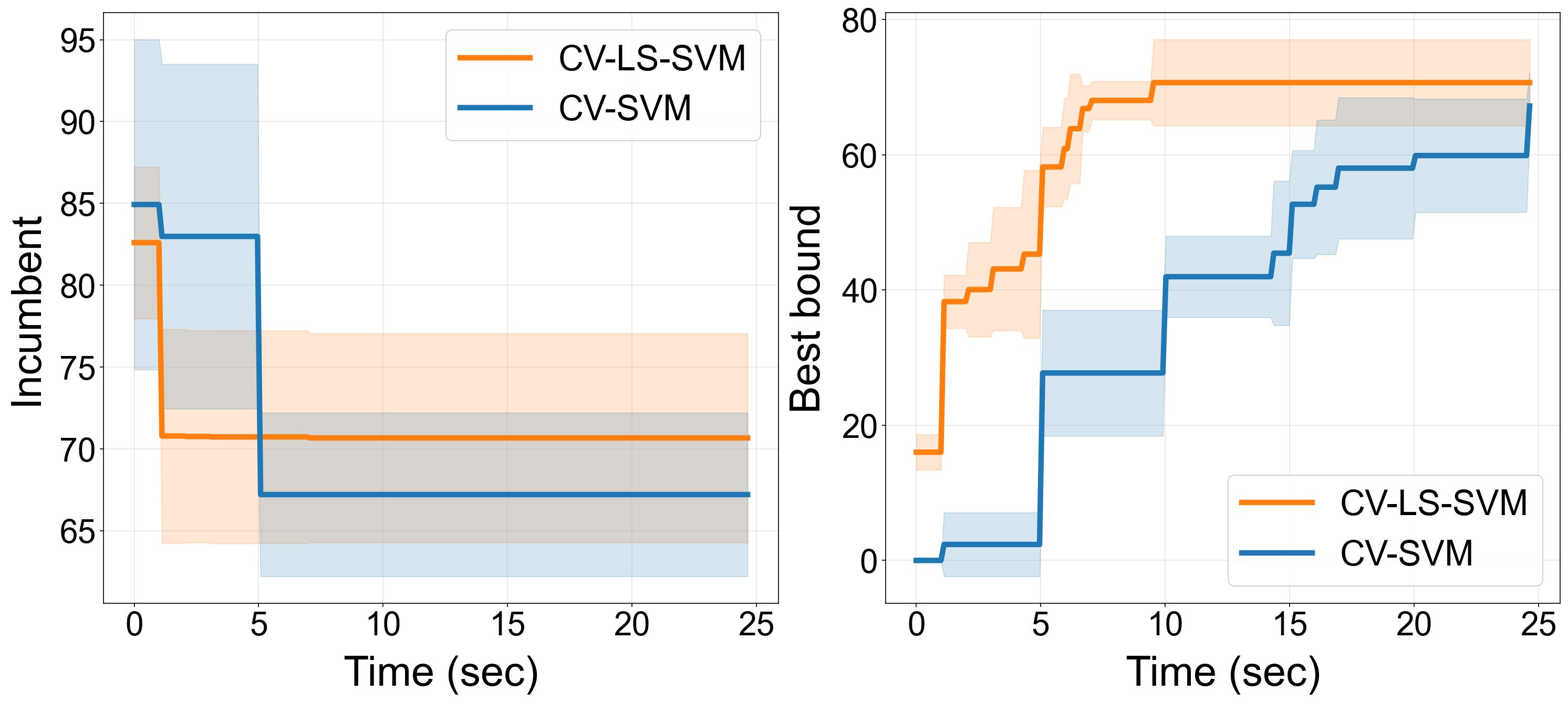}
\caption{SNR \(=1.0\)}
\label{fig:conv-snr10}
\end{subfigure}

\vspace{0.2em}

\begin{subfigure}[t]{0.75\textwidth}
\centering
\includegraphics[width=\linewidth]{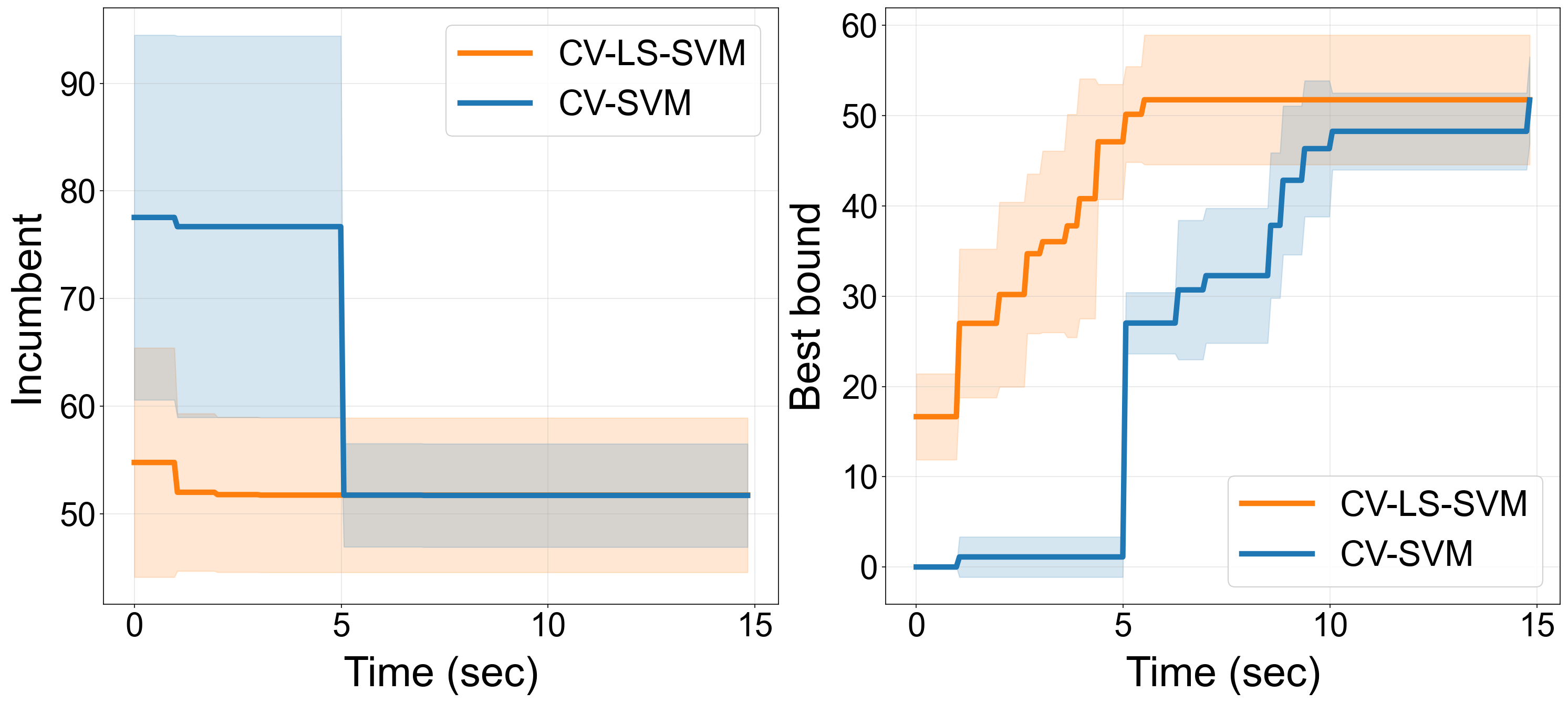}
\caption{SNR \(=4.0\)}
\label{fig:conv-snr40}
\end{subfigure}
\caption{Evolution of the upper and lower bounds of the proposed methods for each SNR condition. In each panel, the left side shows the upper bound and the right side shows the lower bound.}
\label{fig:convergence-curves}
\end{figure}

\FloatBarrier
\section{Conclusion}\label{sec:conclusion}

In this paper, we extended the framework of cross-validation-based feature subset selection to SVM classification problems and proposed CV-SVM and CV-LS-SVM.
The problem is formulated as a bilevel optimization problem consisting of training on the training set and evaluation on the validation set; by exploiting the optimality conditions of LS-SVM, we reduced it to a single-level mixed-integer optimization problem.
This enables direct feature subset selection using general-purpose optimization solvers.

Traditionally, feature selection has relied on regularization, sequential search, or information criteria such as AIC and BIC.
Regularization and sequential search do not provide a framework for directly identifying the best feature subset, and information criteria are likelihood-based evaluation measures, making them difficult to apply to SVMs.
The significance of this paper lies in extending the framework of selecting the best feature subset via the cross-validation criterion, previously established for linear regression, to SVMs.

The numerical experiments confirmed that CV-SVM and CV-LS-SVM achieve a promising balance between classification performance and feature selection accuracy compared with existing methods.
CV-SVM performed well under the strong-noise and moderate-noise conditions, whereas CV-LS-SVM produced sparser and more stable solutions under the weak-noise condition.

Future work includes validating the effectiveness of our framework on real and large-scale datasets to broaden its applicability.
Another important direction is improving the practicality of MIO-based feature selection by reducing computation time and introducing more efficient solution methods.
Further extensions include formulations that jointly select the regularization parameter and the feature subset, as well as extensions to other classification models, including nonlinear SVMs with kernels.

% \section*{Declarations}
%
% \noindent\textbf{Funding.}
% Information on funding is not included in this draft.
%
% \noindent\textbf{Competing interests.}
% Information on competing interests is not included in this draft.
%
% \noindent\textbf{Data availability.}
% This study is based on synthetic data generated for numerical experiments.
%
% \noindent\textbf{Code availability.}
% Information on code availability is not included in this draft.

\backmatter

\bibliography{sn-bibliography}

\end{document}